\theoremstyle{plain} 
\newtheorem{thm}{Theorem}[section]
\newtheorem{lem}[thm]{Lemma}
\newtheorem{prop}[thm]{Proposition}
\newtheorem{cor}[thm]{Corollary}
\theoremstyle{definition}
\newtheorem{dfn}[thm]{Definition}
\newtheorem{construct}[thm]{Construction}
\newtheorem{eg}[thm]{Example}
\newtheorem{rmk}[thm]{Remark}
\numberwithin{equation}{section}
\newcommand{\tensor}{\otimes}
 \DeclareMathOperator{\Tor}{Tor}
 \DeclareMathOperator{\Ext}{Ext}
 \DeclareMathOperator{\Hom}{Hom}
\DeclareMathOperator{\add}{add}
 \DeclareMathOperator{\Supp}{Supp}
 \DeclareMathOperator{\Spec}{Spec}
 \DeclareMathOperator{\Se}{S}
\DeclareMathOperator{\Reg}{R}
 \DeclareMathOperator{\pd}{pd}
 \DeclareMathOperator{\depth}{depth}
 \DeclareMathOperator{\LC}{H}
 \DeclareMathOperator{\pe}{pe}
  \DeclareMathOperator{\modu}{mod}
   \DeclareMathOperator{\MCM}{MCM}
      \DeclareMathOperator{\gl}{gl.dim}
\title{Remarks on non-commutative crepant resolutions of complete intersections}
\author{Hailong Dao}
\address{Department of Mathematics, University of Kansas,  405 Snow Hall, 1460 Jayhawk Blvd, Lawrence,
KS 66045-7523, USA} \email{hdao@math.ku.edu}
\thanks{The author is partially supported by NSF grant 0834050}
\subjclass [2000]{13C14,16S38, 13D07}
\keywords{Crepant resolutions, hypersurfaces, complete intersections, isolated singularities, global dimensions, non-commutative resolutions}
\begin{document}

\bibliographystyle{plain}

\maketitle

\begin{abstract}
We study obstructions to existence of non-commutative crepant resolutions, in the sense of Van den Bergh, over  local complete intersections.
\end{abstract}

\section{Introduction}

Let $R$ be a Gorenstein local normal domain. The following striking definition is due to Van den Bergh (see \cite{V1}, 4.1,4.2):
\begin{dfn}
Suppose that there exist a reflexive module $M$ satisfying:
\begin{enumerate}
\item $A = \Hom_R(M,M)$ is maximal Cohen-Macaulay $R$-module.
\item $A$ has finite global dimension.
\end{enumerate} 
Then $A$ is called a non-commutative crepant resolution (henceforth NCCR) of $R$. 
\end{dfn}

It has been shown that for dimension $3$ isolated, terminal singularities, the existence of projective and non-commutative crepant resolutions are equivalent (\cite{V1}). A projective crepant resolution is a desingularization $f: Y\to X=\Spec(R)$ such that $f^*\omega_X = \omega_Y$.

In this note we observe that the existence of non-commutative crepant resolutions is rather restrictive over complete intersections with small singular locus. In particular, they do not exist  for equicharacteristic, isolated hypersurface singularities  of dimension $3$ which are $\mathbb Q$-factorial, or of  even dimension at least $4$, even though commutative crepant resolutions are known to exist in such situation (albeit rarely).  This is in contrast with  known results in dimension $2$ or $3$. Our results also provide a new perspective on NCCR in the known cases and suggest how to build the  module $M$ in the definition of NCCR. We employ only homological methods over commutative rings, and they typically work over any field,  even in some mixed characteristic cases. 

We now describe the results of the paper. In Section \ref{intro} we give relevant definitions and preliminary results. We observe a connection between NCCR and a module-theoretic condition known as $\Tor$-rigidity (see Definition \ref{Tordef}). 

Section \ref{hyper} deals with hypersurface singularity. Our main result is:

\begin{thm}
Let $R$ be a local hypersurface satisfying condition $(\Reg_2)$. Assume $\hat R \cong S/(f)$ where $S$ is an  equicharacteristic or unramified  regular local ring and $f \in S$ is a regular element.  
\begin{enumerate}
\item If $\dim R=3$ and the class group of $R$ is torsion (i.e. $R$ is $\mathbb Q$-factorial), then $R$ admits no NCCR.  
\item If $R$ has isolated singularity and $\dim R$  is an even number greater than $3$, then $R$ admits no NCCR. 
\item Let $\mathcal N$ be the set of isomorphism classes of indecomposable maximal Cohen-Macaulay  modules over $R$ which are not  $\Tor$-rigid. Assume that $\mathcal N$ is not empty. Let $M$ a module such that $R|M$ and 
$ \mathcal N \subset \pe^n(M)$ (see Definition \ref{pe}) for some $n$.  If $A =\Hom_R(M,M)$ is $(\Se_3)$, then it has finite global dimension at most $\dim R+n$. In particular, if $A$ is MCM, then it is a NCCR over $R$. 
\end{enumerate} 
\end{thm}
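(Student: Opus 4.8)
The plan is to bound $\pd_A N \le \dim R + n$ for every finitely generated $A$-module $N$. Since $A$ is module-finite over the local ring $R$, every simple $A$-module is annihilated by the maximal ideal of $R$, so $\gl A = \sup_N \pd_A N$, and this bound gives $\gl A \le \dim R + n$. The last assertion is then automatic: if in addition $A$ is maximal Cohen--Macaulay over $R$ then $\depth_R A = \dim R$, and an algebra of the form $\Hom_R(M,M)$ that is module-finite over $R$, maximal Cohen--Macaulay, and of finite global dimension is an NCCR of $R$ by definition.

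The engine is the functor $F := \Hom_R(M,-)$. Because $R \mid M$, the module $M$ is a generator, so $F$ is fully faithful on $\modu R$ and restricts to an equivalence between $\add M$ and the category of finitely generated projective $A$-modules. Hence an exact complex $0 \to M_k \to \cdots \to M_0 \to X \to 0$ with all $M_i \in \add M$ which remains exact after applying $F$ --- and this is exactly what membership in $\pe^k(M)$ provides, the maps being right $\add M$-approximations (Definition \ref{pe}) --- is carried by $F$ to an $A$-projective resolution of $\Hom_R(M,X)$ of length $k$. In particular $X \in \pe^n(M)$ forces $\pd_A \Hom_R(M,X) \le n$, and since $R \in \add M$ the free $R$-modules already lie in $\pe^0(M) \subseteq \pe^n(M)$.

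The proof is then organized around two reductions. First, one reduces a general $N$ to a module of the form $\Hom_R(M,X)$ with $X$ maximal Cohen--Macaulay, at the cost of $\dim R$: pass to the syzygy $\Omega_A^{\dim R} N$, and use that $R$ is $(\Reg_2)$ --- so at a prime $\mathfrak p$ of height at most $2$ the ring $A_\mathfrak p$ is a full matrix ring over the regular local ring $R_\mathfrak p$, of global dimension at most $2$ --- together with the hypothesis that $A$ is $(\Se_3)$, to see by a d\'evissage along the singular locus that $\Omega_A^{\dim R} N$ agrees with such a $\Hom_R(M,X)$ up to $A$-projective summands and summands whose projective dimension is bounded directly. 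Second, for $X$ maximal Cohen--Macaulay, decompose $X$ into indecomposables: a summand isomorphic to $R$ lies in $\add M$; a non-free indecomposable summand $X_j$ is, by the very definition of $\mathcal N$, either a member of $\mathcal N$ --- hence of $\pe^n(M)$ by hypothesis, so $\pd_A \Hom_R(M,X_j) \le n$ by the previous paragraph --- or $\Tor$-rigid.

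The $\Tor$-rigid summands are the main obstacle, and here one must bring in the connection between NCCRs and $\Tor$-rigidity. I would combine the hypersurface duality expressing $\Ext^{>0}_R(M,-)$ in terms of $\Tor^R(\dual M,-)$ with the hypothesis that $A = \Hom_R(M,M)$ is $(\Se_3)$: for a non-free $\Tor$-rigid maximal Cohen--Macaulay $X_j$, non-vanishing of $\Tor^R_{\ge 1}(\dual M, X_j)$ would, after localising at a minimal prime of the support of these modules --- which by $(\Reg_2)$ has height at least $3$ --- and using that $\Tor$-rigidity localises, be incompatible with the codimension-$3$ vanishing imposed by $(\Se_3)$; so $\Tor^R_{\ge 1}(\dual M, X_j) = 0$, whereupon the right $\add M$-approximation sequences of $X_j$ stay exact under $F$ and a syzygy count then bounds $\pd_A \Hom_R(M,X_j)$ in the same way as for the non-rigid summands, i.e.\ by $n$. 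Granting this, $\pd_A \Hom_R(M,X) = \max_j \pd_A \Hom_R(M,X_j) \le n$ for all maximal Cohen--Macaulay $X$; combined with the first reduction this yields $\pd_A N \le \dim R + n$ for every $N$, and when $A$ is maximal Cohen--Macaulay it is an NCCR. The two steps where I expect genuine work are this rigidity-versus-$(\Se_3)$ analysis and the d\'evissage of the first reduction.
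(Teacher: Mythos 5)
Your overall architecture is the paper's: reduce an arbitrary $A$-module, via syzygies built from $\add M$-covers (this is Construction \ref{cons}), to a module $\Hom_R(M,X)$ with $X$ maximal Cohen--Macaulay and $\Hom_R(M,X)$ satisfying $(\Se_3)$, and then handle the indecomposable summands of $X$. But the step you flag as "genuine work" --- the $\Tor$-rigid summands --- is exactly where your plan does not close, and it does not close in the direction you propose. Nothing in the hypotheses bounds $\pd_A\Hom_R(M,X_j)$ by $n$ for a $\Tor$-rigid non-free $X_j$: only $\mathcal N$, the \emph{non}-rigid indecomposables, is assumed to lie in $\pe^n(M)$, so your concluding "syzygy count" has no input; a $\Tor$-rigid module has no reason to admit a length-$n$ resolution by modules in $\add M$. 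The paper's resolution is that such a summand \emph{cannot exist}: since $\Hom_R(M,X_j)$ is $(\Se_3)$ and $X_j$ is $(\Se_2)$ and locally free in codimension $2$, Lemma \ref{useful} gives $\Ext^1_R(M,X_j)=0$; Jothilingham's theorem (Theorem \ref{Jothi}), applied with the rigid module $X_j$, then identifies $\Hom_R(M,X_j)$ with $M^*\otimes_R X_j$, which is therefore $(\Se_3)$, hence torsion-free; embedding $M^*$ in a free module with torsion cokernel $L$ shows $\Tor_1^R(L,X_j)=0$, rigidity propagates this to $\Tor_i^R(M^*,X_j)=0$ for all $i>0$, and the Huneke--Wiegand theorem for hypersurfaces then forces $X_j$ (or $M$) to be free --- a contradiction. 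Your sketch invokes neither Jothilingham's theorem nor the Huneke--Wiegand freeness conclusion, and without the latter the argument does not terminate.

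Two secondary problems. First, you misread Definition \ref{pe}: $\pe^n(M)$ is built by \emph{extensions} (with the module appearing as either sub or quotient), not by right $\add M$-approximation resolutions, so $X\in\pe^n(M)$ does not hand you an exact complex $0\to M_k\to\cdots\to M_0\to X\to 0$. The correct statement, $\pd_A\Hom_R(M,C)\le n$ for $C\in\pe^n(M)$, is proved by induction on $n$ (Proposition \ref{global}) and requires that the defining short exact sequences stay exact under $\Hom_R(M,-)$, i.e.\ that $\Ext^1_R(M,M)=0$; this in turn must be extracted from the hypothesis that $A$ is $(\Se_3)$ via Lemma \ref{useful}, a verification absent from your plan. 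Second, your first reduction is correct in outline but the justification via matrix rings at height-$\le 2$ primes and "d\'evissage along the singular locus" is not needed and not obviously sound; the clean route is that any $A$-module has a second syzygy of the form $\Hom_R(M,N)$, and the $\add M$-resolution of $N$ from Construction \ref{cons} (which stays exact under $\Hom_R(M,-)$ because the generating sets are chosen to include generators of $\Hom_R(R,N)\cong N$, using $R|M$) produces after $\dim R$ further steps an MCM $R$-module whose image under $\Hom_R(M,-)$ is a high $A$-syzygy and hence $(\Se_3)$.
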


We use this Theorem to analyze NCCRs of  simple singularities in dimension $3$ in \ref{simple}.

Section \ref{ci} deals with complete intersection singularities. Here we show that if $R$ is regular in codimension $3$ and $M$ satisfies Serre's condition $(\Se_3)$, then $\Hom_R(M,M)$  can not be a NCCR. We also study general conditions for NCCR to deform. Finally, we observe how our ideas can be applied to the characteristic $p$ situation, where they help explain why the non-commutative analogue of $F$-blowups often fail to be crepant. 

We would like to deeply thank Graham Leuschke for patiently explaining his paper to us and pointing our some errors on an early draft. Special thanks also go to Craig Huneke and Tommaso de Fernex for very helpful conversations. 

\section{Tor-rigidity and NCCR}\label{intro}
In this section we point out some connections between NCCR and $\Tor$-rigidity, a technical condition  well-known in commutative algebra. We begin with  relevant definitions and notations:

Let $R$ be a local ring and $M,N$ finite $R$-modules. Let $M^{*} := \Hom(M,R)$ be the dual of $M$. The module $M$ is
called \textit{reflexive} provided the natural map $M\to M^{**}$ is
an isomorphism. The module $M$ is called \textit{maximal
Cohen-Macaulay} (henceforth MCM) if $\depth_RM =
\dim R$. The ring $R$ is said to satisfy condition $(\Reg_n)$ if $R_p$ is regular for any $p\in \Spec(R)$ of codimension at most $n$. For a ring $A$, we will denote by $\gl A$ the global dimension of $A$. 

For a non-negative integer $n$, $M$ is said to satisfy Serre's condition $(\Se_n)$ if:
$$ \depth_{R_p}M_p \geq \min\{n,\dim(R_p)\} \ \forall p\in \Spec(R)$$

\begin{dfn}\label{Tordef}
A pair of $R$-modules $(M,N)$ is called $\Tor$-{\textit{rigid}} if for any
integer $i\geq0$, $\Tor_i^R(M,N)=0$ implies $\Tor_j^R(M,N)=0$ for
all $j\geq i$. Moreover, $M$ is $\Tor$-\textit{rigid} if for all $N$, the
pair $(M,N)$ is $\Tor$-rigid.
\end{dfn}

\begin{dfn}\label{pe}
Let $\mathcal X, \mathcal Y$ be subcategories of $\modu R$.  Let $\add \mathcal X$ denotes the set of all  direct summands of some direct sum of modules in $\mathcal X $. We define $\pe_R(\mathcal X,\mathcal Y)$, or $\pe(\mathcal X,\mathcal Y)$ to be the subcategory of $\modu R$ consisting of all modules $C$ such that there are exact sequence of either of the forms:
$$ 0 \to A \to C \to B \to 0$$
$$ 0 \to A \to B \to C \to 0$$
with $A \in \mathcal X$ and $B\in \mathcal Y$.  
For any integer $n\geq0$ we defines the subcategories $\pe^n\mathcal X$ inductively as follows: $\pe^0X =\add \mathcal X$, $\pe^{n+1}\mathcal X=\add(\pe(\add\mathcal X,\pe^n\mathcal X))$. We let $\pe^{\infty} \mathcal X = \cup_{n\geq 0} \pe^n \mathcal X$.  
\end{dfn}

We first record a useful:
\begin{lem}\label{useful}
Let $R$ be a Cohen-Macaulay local ring, $M,N$ are finitely generated $R$-modules and $n>1$ an integer. Consider the two conditions:
\begin{enumerate}
\item  $\Hom(M,N)$  is $(\Se_{n+1})$.
\item  $\Ext_R^i(M,N)=0$ for $1 \leq i \leq n-1$.
\end{enumerate}
If  $M$ is locally free in codimension $n$ and $N$ satisfies $(\Se_n)$, then (1) implies (2). If  $N$ satisfies $(\Se_{n+1})$, then (2) implies (1).
\end{lem}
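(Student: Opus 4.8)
The plan is to drive both implications with one engine. Fix a free resolution $F_\bullet\to M$, write $b_j=\rk F_j$, and consider the cocomplex $\Hom_R(F_\bullet,N)$: its cohomology in degree $j$ is $\Ext^j_R(M,N)$, and each term $\Hom_R(F_j,N)$ is isomorphic to $N^{b_j}$. The two tools are the depth lemma (for $0\to A\to B\to C\to 0$ one has $\depth A\ge\min\{\depth B,\depth C+1\}$ and $\depth C\ge\min\{\depth A-1,\depth B\}$) and the observation that a nonzero submodule of a module of positive depth has positive depth. Since $R$ is Cohen--Macaulay I identify codimension, height, and $\dim R_p$ throughout.

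For $(2)\Rightarrow(1)$: the vanishing $\Ext^i_R(M,N)=0$ for $1\le i\le n-1$ makes
\[
0\to\Hom_R(M,N)\to N^{b_0}\to N^{b_1}\to\cdots\to N^{b_{n-1}}\to N^{b_n}
\]
exact at every spot except the last, so there is an exact sequence $0\to\Hom_R(M,N)\to N^{b_0}\to\cdots\to N^{b_{n-1}}\to K\to 0$ with $K\subseteq N^{b_n}$. Localizing at an arbitrary prime $q$, splitting into short exact sequences and iterating the depth lemma yields $\depth\Hom_R(M,N)_q\ge\min\{\,\depth N_q,\ \depth K_q+n\,\}$ over $R_q$. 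Here $(\Se_{n+1})$ for $N$ makes the first term at least $\min\{n+1,\dim R_q\}$, while $K_q\subseteq N_q^{b_n}$ makes the second term at least $n+1$ once $\dim R_q\ge 1$ (the inequality being vacuous when $\dim R_q=0$). Hence $\Hom_R(M,N)$ is $(\Se_{n+1})$.

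For $(1)\Rightarrow(2)$, which I expect to be the real content: since $M$ is locally free in codimension $n$, each $\Ext^i_R(M,N)$ with $i\ge1$ is supported in codimension $\ge n+1$. Assume toward a contradiction that $\Ext^i_R(M,N)\ne0$ for some $i$ with $1\le i\le n-1$, take the smallest such $i$, and choose $p\in\Ass\Ext^i_R(M,N)$ of minimal codimension, so $\dim R_p\ge n+1$ and $\depth\Ext^i_R(M,N)_p=0$. After localizing at $p$, the Serre hypotheses give $\depth\Hom_R(M,N)_p\ge n+1$ and $\depth N_p\ge n$. Let $B_j$ be the image of $N_p^{b_{j-1}}\to N_p^{b_j}$ and $Z_j$ the kernel of $N_p^{b_j}\to N_p^{b_{j+1}}$ in the localized cocomplex; minimality of $i$ gives $Z_j=B_j$ for $1\le j\le i-1$, so one has short exact sequences $0\to\Hom_R(M,N)_p\to N_p^{b_0}\to B_1\to0$ and $0\to B_j\to N_p^{b_j}\to B_{j+1}\to0$ for $1\le j\le i-1$. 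Running the depth lemma down this chain gives $\depth B_j\ge n-j+1$, so $\depth B_i\ge n-i+1\ge2$. Then $0\to B_i\to Z_i\to\Ext^i_R(M,N)_p\to0$ together with $\depth\Ext^i_R(M,N)_p=0$ and $\depth B_i-1\ge1$ forces $\depth Z_i=0$. But from $0\to Z_i\to N_p^{b_i}\to B_{i+1}\to0$, with $B_{i+1}$ either zero (whence $Z_i\cong N_p^{b_i}$, already impossible) or a nonzero submodule of $N_p^{b_{i+1}}$ and hence of positive depth, the depth lemma gives $\depth Z_i\ge\min\{\,\depth N_p,\ \depth B_{i+1}+1\,\}\ge2$, contradicting $\depth Z_i=0$. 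Therefore $\Ext^i_R(M,N)=0$ for $1\le i\le n-1$.

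The main obstacle is exactly this last step: the test prime $p$ must be chosen so that the hypothesis ``$\Hom_R(M,N)$ is $(\Se_{n+1})$'' yields genuinely \emph{extra} depth ($\ge n+1$ rather than $\ge n$), and then one must recognize that the cocycle module $Z_i$ is trapped between incompatible bounds: it is pushed to depth $0$ by the nonvanishing of $\Ext^i_R(M,N)$ at $p$, yet held at depth $\ge2$ because it is a kernel of maps among copies of the $(\Se_n)$ module $N$. Once this tension is located, the surrounding bookkeeping (exactness of the truncated $\Hom$-complex, repeated application of the depth lemma) is routine.
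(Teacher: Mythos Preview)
Your argument is correct. For $(2)\Rightarrow(1)$ it coincides with the paper's: both truncate the cocomplex $\Hom_R(F_\bullet,N)$ and count depth along the resulting exact sequence.

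For $(1)\Rightarrow(2)$ the paper proceeds differently. It first reduces, by induction on $\dim R$ (localizing on the punctured spectrum), to the case where each $\Ext^i_R(M,N)$ with $1\le i\le n-1$ has finite length, and then invokes the Acyclicity Lemma (Peskine--Szpiro; \cite{BH}, Exercise~1.4.23) applied to the augmented truncation $0\to\Hom_R(M,N)\to N^{b_0}\to\cdots$. Your proof avoids both the dimension induction and the citation: you go directly to an associated prime $p$ of the first nonvanishing $\Ext$, and then run by hand the depth-counting that lies inside the proof of the Acyclicity Lemma, trapping the cocycle module $Z_i$ between incompatible depth bounds. What your route buys is self-containment; what the paper's route buys is brevity. (A minor remark: you do not actually use the minimality of the codimension of $p$ among associated primes---any $p\in\Ass\Ext^i_R(M,N)$ already has $\dim R_p\ge n+1$ by the local-freeness hypothesis, and that is all you need.)
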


\begin{proof}
The first claim is  obvious if $\dim R\leq n$, as then  $M$ is free by assumptions.  By localizing at the primes on the punctured spectrum of $R$ and using induction on dimension, we can assume that  all the modules  $\Ext_R^i(M,N)$, $1 \leq i \leq n-1$  have finite length. Take a free resolution of $P$ of $M$ and look at the first $n$ terms of $\Hom(P,N)$.  As all the cohomology of this complex are $\Ext$ -modules, the claim now follows from the Acyclicity Lemma (see \cite{BH}, Exercise 1.4.23). 

For the second claim, one again takes a free  resolution of $P$ of $M$ and look at $\Hom(P,N)$.  The vanishing of the $\Ext$ modules gives the long exact sequence: 
$$ 0 \to \Hom_R(M,N) \to N^{b_0} \to \cdots \to N^{b_{n-1}} \to B \to 0 $$

Counting depth shows that $\depth \Hom_R(M,N)_p \geq \min\{n+1, \depth(N_p)\}$ for any $p \in \Spec(R)$, which is what we want.

\end{proof}

The following result was first proved by Jothilingham (\cite{Jo}, Main Theorem and the discussion of the last Proposition).  For a more modern presentation, see \cite{Jor}. 

\begin{thm}\label{Jothi} (Jothilingham)
Let $R$ be a local ring and $M,N$ are finite $R$-modules such that  $N$ is  $\Tor$-rigid. If $\Ext^1_R(M,N)=0$ then $M^{*}\tensor_RN \cong \Hom_R(M,N)$ via the canonical map. In particular, $\Ext^1_R(N,N)=0$ if and only if $N$ is free. 
\end{thm}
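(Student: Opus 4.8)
The plan is to extract the result from the key identity $M^{*}\tensor_R N \cong \Hom_R(M,N)$, which I will first establish under the hypotheses $\Ext^1_R(M,N)=0$ and $N$ $\Tor$-rigid. Take a minimal free presentation $F_1 \xrightarrow{\varphi} F_0 \to M \to 0$, and let $M^{\ast}$ be presented by the dual: $0 \to M^{*} \to F_0^{*} \xrightarrow{\varphi^{*}} F_1^{*}$, with cokernel $D := \coker(\varphi^{*})$ the \emph{transpose} (Auslander dual) of $M$; note $\Ext^1_R(M,N) \cong \Tor_0$-type term that fits into the standard exact sequence relating $D$, $M^{*}$ and $\Hom(M,N)$. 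Concretely, applying $-\tensor_R N$ to $0 \to M^{*} \to F_0^{*} \xrightarrow{\varphi^{*}} F_1^{*} \to D \to 0$ and splitting it into two short exact sequences, one obtains an exact sequence
\begin{equation*}
0 \to \Tor_2^R(D,N) \to M^{*}\tensor_R N \to F_0^{*}\tensor_R N \xrightarrow{\varphi^{*}\tensor N} F_1^{*}\tensor_R N,
\end{equation*}
so that $\Tor_2^R(D,N) = \ker(M^{*}\tensor_R N \to \Hom_R(M,N))$ once we identify $\ker(\varphi^{*}\tensor N)$ with $\Hom_R(M,N)$ via the canonical map (this identification is just left-exactness of $\Hom(-,N)$ applied to the presentation of $M$). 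Meanwhile $\Tor_1^R(D,N) \cong \coker(\varphi^{*}\tensor N)_{\text{relevant part}}$ is controlled by $\Ext^1_R(M,N)$; more precisely the standard four-term analysis gives $\Tor_1^R(D,N) \cong \Ext^1_R(M,N) = 0$.

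Now I invoke $\Tor$-rigidity of $N$: since the pair $(D,N)$ is $\Tor$-rigid and $\Tor_1^R(D,N)=0$, it follows that $\Tor_j^R(D,N)=0$ for all $j\geq 1$, in particular $\Tor_2^R(D,N)=0$. Feeding this back into the displayed exact sequence shows the canonical map $M^{*}\tensor_R N \to \Hom_R(M,N)$ is injective; a parallel diagram chase (or the observation that the cokernel of this map embeds in $\Tor_1^R(D,N)=0$) shows it is surjective, hence an isomorphism. This proves the first assertion.

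For the ``in particular'' statement, specialize $M = N$. If $N$ is free then $\Ext^1_R(N,N)=0$ trivially. Conversely, suppose $\Ext^1_R(N,N)=0$; I want to conclude $N$ is free. Apply the isomorphism just proved with $M=N$: since $\Ext^1_R(N,N)=0$ and $N$ is $\Tor$-rigid, we get $N^{*}\tensor_R N \cong \Hom_R(N,N) = \End_R(N)$ via the canonical map. The canonical map $N^{*}\tensor_R N \to \End_R(N)$ always has the identity endomorphism $\mathrm{id}_N$ in its image only if $N$ is free-like; the point is that $\mathrm{id}_N \in \End_R(N)$ must be in the image, so $\mathrm{id}_N = \sum_i f_i \tensor x_i$ with $f_i \in N^{*}$, $x_i \in N$, i.e. $\mathrm{id}_N$ factors through a finite free module $R^{t}$. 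Thus $N$ is a direct summand of $R^{t}$, hence projective, hence free over the local ring $R$.

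The main obstacle I anticipate is the bookkeeping in the first paragraph: correctly identifying which $\Tor$ of the Auslander transpose $D$ computes the kernel and cokernel of the canonical map $M^{*}\tensor N \to \Hom_R(M,N)$, and verifying the identification $\Tor_1^R(D,N)\cong \Ext^1_R(M,N)$. This is a standard but delicate piece of homological algebra (it is essentially the content of the change-of-rings / Auslander-transpose spectral sequence in low degrees), and one must be careful that $\Tor$-rigidity of $N$ is being applied to the pair $(D,N)$ — which is legitimate precisely because $\Tor$-rigidity of $N$ means rigidity against \emph{all} modules. Once that low-degree exact sequence is pinned down, the rest is a short diagram chase plus the local-freeness criterion via splitting of $\mathrm{id}_N$.
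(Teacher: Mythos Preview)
The paper does not give a proof of this theorem; it is quoted from Jothilingham, with a pointer to Jorgensen for a modern treatment. Your overall architecture is the right one and matches that modern route: the Auslander transpose $D=\coker(\varphi^{*})$, the four-term exact sequence
\[
0 \longrightarrow \Tor_2^R(D,N) \longrightarrow M^{*}\otimes_R N \longrightarrow \Hom_R(M,N) \longrightarrow \Tor_1^R(D,N) \longrightarrow 0,
\]
and the dual-basis argument (if $N^{*}\otimes_R N \to \Hom_R(N,N)$ is onto then $\mathrm{id}_N$ factors through a finite free module, so $N$ is projective, hence free) are all correct.

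The genuine gap is exactly the step you yourself flagged as the main obstacle: the asserted identification $\Tor_1^R(D,N)\cong\Ext^1_R(M,N)$ is \emph{false}. For a clean counterexample take $N=R$: then $\Tor_i^R(D,R)=0$ for every $i\geq 1$ and every $M$, whereas $\Ext^1_R(M,R)$ need not vanish (for instance $R=k[[x]]$, $M=k$ gives $\Ext^1_R(k,R)\cong k$). The correct description is
\[
\Tor_1^R(D,N)\ \cong\ \underline{\Hom}_R(M,N)\ \cong\ \Ext^1_R(M,\Omega N),
\]
one syzygy off in the second variable from what you wrote. Consequently the hypothesis $\Ext^1_R(M,N)=0$ does not by itself annihilate $\Tor_1^R(D,N)$, and Tor-rigidity cannot be postponed to the passage from $\Tor_1$ to $\Tor_2$; it must already be used in establishing $\Tor_1^R(D,N)=0$. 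Concretely, $\Ext^1_R(M,N)=0$ only makes the restriction $\Hom_R(F_0,N)\to\Hom_R(\Omega M,N)$ surjective, which shows that the canonical map for $\Omega M$ is onto, i.e.\ $\Tor_1^R(\mathrm{Tr}\,\Omega M,N)=0$; rigidity then kills all $\Tor_i^R(\mathrm{Tr}\,\Omega M,N)$, and one must still transfer this vanishing back to $D=\mathrm{Tr}\,M$. That transfer is not the one-line identity you wrote down, and it is where the cited references do the real work.
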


\begin{cor}\label{keyCor}
Let $R$ be a  local ring satisfying condition $(\Reg_2)$ and $(\Se_3)$. Suppose that $M$ is a reflexive $R$-module giving an NCCR for $R$. Then any non-free  module in  $\add(M)$ is not $\Tor$-rigid. 
\end{cor}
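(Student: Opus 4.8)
The goal is to show that if $M$ gives an NCCR and $N$ is a non-free indecomposable direct summand of $M$, then $N$ is not $\Tor$-rigid. The plan is to argue by contradiction: assume $N$ is $\Tor$-rigid, and derive that $N$ is free, contradicting the choice of $N$. The two tools available are Lemma \ref{useful} (converting Serre conditions on $\Hom$ into Ext-vanishing) and Theorem \ref{Jothi} (Jothilingham: $\Tor$-rigidity plus $\Ext^1(N,N)=0$ forces $N$ free).

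First I would set up the hypotheses needed to apply Lemma \ref{useful} with $n=2$. Since $M$ gives an NCCR, $A = \Hom_R(M,M)$ is MCM, hence in particular satisfies $(\Se_3)$; because $N \in \add(M)$, the module $\Hom_R(N,N)$ is a direct summand of $\Hom_R(M,M)^{k}$ for suitable $k$ (as $\Hom$ distributes over the finite direct sums in $M$), so $\Hom_R(N,N)$ is also $(\Se_3)$, i.e. condition (1) of Lemma \ref{useful} holds with $M=N=N$. Next, the condition $(\Reg_2)$ on $R$ says exactly that $R_{\mathfrak p}$ is regular for all $\mathfrak p$ of codimension $\le 2$; over a regular local ring every finite module is free, so $N$ is locally free in codimension $2$ — this is the "$M$ is locally free in codimension $n$" hypothesis with $n=2$. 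Finally, $N$ is reflexive (being a summand of the reflexive module $M$), hence satisfies $(\Se_2)$, giving the "$N$ satisfies $(\Se_n)$" hypothesis with $n=2$. Thus Lemma \ref{useful} applies and yields $\Ext^1_R(N,N) = 0$ (the range $1 \le i \le n-1$ is just $i=1$).

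Now I invoke Theorem \ref{Jothi} with $M = N$: since $N$ is assumed $\Tor$-rigid and $\Ext^1_R(N,N)=0$, the theorem gives that $N$ is free. But $N$ was chosen to be a non-free module in $\add(M)$ — more precisely, it suffices to test indecomposable non-free summands, and a non-free module in $\add(M)$ has such a summand — so this is the desired contradiction. Hence no non-free module in $\add(M)$ can be $\Tor$-rigid.

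I do not expect a serious obstacle here; the statement is essentially a packaging of the two cited results. The one point that needs a little care is checking that $N$ genuinely satisfies all four input hypotheses of Lemma \ref{useful} simultaneously — in particular that reflexivity of $M$ is inherited by summands (immediate, since $(-)^{**}$ commutes with direct sums) and that $(\Se_3)$ of $\Hom_R(M,M)$ descends to $\Hom_R(N,N)$ (immediate once one observes $\Hom_R(N,N)$ is a direct summand of a direct sum of copies of $\Hom_R(M,M)$, using that $\add$ is closed under summands and $\Hom$ is additive in each variable). With those observations in place the argument is a two-line chain: $(\Reg_2)$ + $(\Se_3)$ + NCCR $\Rightarrow \Ext^1_R(N,N)=0 \Rightarrow$ (Jothilingham, using $\Tor$-rigidity) $N$ free.
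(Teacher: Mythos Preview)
Your approach is exactly the paper's: argue by contradiction, use Lemma~\ref{useful} (with $n=2$) to get $\Ext^1_R(N,N)=0$, then invoke Jothilingham's Theorem~\ref{Jothi} to force $N$ free. One correction: the assertion ``over a regular local ring every finite module is free'' is false (e.g.\ the residue field of a regular local ring of positive dimension); what is true and what you actually need is that a \emph{reflexive} module over a regular local ring of dimension at most $2$ is free (reflexive $\Rightarrow (\Se_2) \Rightarrow$ MCM $\Rightarrow$ free over regular). Since you already observe that $N$ is reflexive, simply move that observation before the ``locally free in codimension $2$'' claim and use it there.
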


\begin{proof}
Suppose there is a non-free summand $N$ of $M$ which is $\Tor$-rigid. Then $N$ is reflexive, so it is $(\Se_2)$ and also free in codimension $2$ as $R$ is $(\Reg_2)$. Moreover,  $\Hom_R(N,N)$ is $(\Se_3)$ as it is a summand of $\Hom_R(M,M)$.  Lemma \ref{useful} and Theorem \ref{Jothi} combine to imply that $N$ is free, contradiction.
\end{proof}
In the next section, we shall prove a partial converse to this Corollary for hypersurfaces. 

\begin{rmk}\label{class}
Suppose that $R$ is a local ring which is $(\Reg_2)$ and $(\Se_3)$. Then  an reflexive ideal $I$ representing a non-trivial element in the class group of $R$ must not be $\Tor$-rigid. Indeed, we have $\Hom_R(I,I) \cong R$ satisfies $(\Se_3)$.  
\end{rmk}

In general, it is a very delicate problem to decide whether a module (or a pair)  is $\Tor$-rigid. In the hypersurface case, however, there has been recent progress. We summarize the relevant results in:

\begin{thm}\label{rigid}
Let $R$ be a local hypersurface with isolated singularity. Assume $\hat R \cong S/(f)$ where $S$ is an  equicharacteristic or unramified  regular local ring and $f \in S$ is a regular element. Let $M$ be a finite $R$-module. 
\begin{enumerate}
\item If $[M]=0$ in $\overline{G}(R)_{\mathbb Q}$, the reduced Grothendieck group of finite $R$-modules with rational coefficients, then $M$ is $\Tor$-rigid.
\item If $\dim R=3$ and the class group of $R$ is torsion (i.e. $R$ is $\mathbb Q$-factorial), then $M$ is $\Tor$-rigid.
\item Assume $\dim R$  is an even number greater than $3$ and $M$ is reflexive. Then  $\Hom_R(M,M)$ is $(\Se_3)$ if and only if $M$ is free.
\item In this part we do not assume isolated singularity. Assume $R$ is a local hypersurface of dimension at least $3$. Suppose that $M$ is reflexive and  locally free on the punctured spectrum of $R$. Assume that $[M]=0$ or $[M^*]=0$ in $\overline{G}(R)_{\mathbb Q}$.  Then $\Hom_R(M,M)$ is $(\Se_3)$ if and only if $M$ is free.
\end{enumerate} 
\end{thm}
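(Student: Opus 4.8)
The plan is to prove all four parts of Theorem~\ref{rigid} by reducing to known results about Tor-rigidity over hypersurfaces, combined with the Grothendieck-group machinery. I would first recall the theorem of Huneke--Wiegand and its refinements (via Hochster's theory of $\theta$-invariants, or the results of Dao and Moore--Piepmeyer--Puthenpurakal--Walker): for a hypersurface $R$ with isolated singularity, the function $\theta^R(M,N)$ is defined for modules locally free on the punctured spectrum, it vanishes when one factor is Tor-rigid, and the key point is that the pair $(M,N)$ is Tor-rigid whenever $\theta^R(M,N)=0$. So everything hinges on controlling the $\theta$-pairing. The main obstacle throughout will be establishing the vanishing of the relevant $\theta$-invariants (or equivalently the vanishing of certain intersection multiplicities), which is where the hypotheses on the Grothendieck group, the dimension parity, and $\mathbb{Q}$-factoriality enter.

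For part (1): if $[M]=0$ in $\overline{G}(R)_{\mathbb Q}$, I want to show $\theta^R(M,N)=0$ for every $N$ locally free on the punctured spectrum, and then conclude Tor-rigidity. The point is that $\theta^R(-,N)$ factors through (a rational version of) the Grothendieck group modulo the class of $R$ — indeed $\theta^R$ is biadditive on short exact sequences of modules locally free on the punctured spectrum, and $\theta^R(R,N)=0$ — so it descends to a pairing on $\overline{G}(R)_{\mathbb Q}$. Hence $[M]=0$ forces $\theta^R(M,N)=0$; for modules $N$ not locally free on the punctured spectrum one reduces to the locally-free case by the standard argument that Tor of the original pair agrees in high homological degree with Tor against a high syzygy, which is locally free. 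Part (2): when $\dim R = 3$ and $\Cl(R)$ is torsion, the main input is the theorem (Dao, building on Dancrescu--?, or the positivity of $\theta$ in dimension $3$) that $\overline{G}(R)_{\mathbb Q}$ is generated by (or even equal to) the image of $\Cl(R)_{\mathbb Q}$ — more precisely for a three-dimensional hypersurface with isolated singularity the reduced Grothendieck group with rational coefficients is controlled by the divisor class group, so torsion class group forces $\overline{G}(R)_{\mathbb Q}=0$, hence $[M]=0$ for every $M$, and part (1) applies. I expect this identification of $\overline{G}(R)_{\mathbb Q}$ with $\Cl(R)_{\mathbb Q}$ to be the subtle point, relying on the Riemann--Roch / Chow-group description of $G(R)$ together with vanishing of higher Chow pieces for an isolated hypersurface singularity of dimension $3$.

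For parts (3) and (4): here the strategy is to combine Corollary~\ref{keyCor}'s mechanism with rigidity. Suppose $M$ is reflexive and $\Hom_R(M,M)$ is $(\Se_3)$; since $R$ is an isolated hypersurface singularity of dimension $\geq 3$, $R$ is automatically $(\Reg_2)$ and $(\Se_3)$ (it is even $(\Se_k)$ for all $k$), and $M$ being reflexive is $(\Se_2)$ and locally free in codimension $2$, in fact locally free on the punctured spectrum. By Lemma~\ref{useful} (with $n=2$), $\Hom_R(M,M)$ being $(\Se_3)$ gives $\Ext^1_R(M,M)=0$. Now by Theorem~\ref{Jothi}, to conclude $M$ is free it suffices to know $M$ (or rather $M$ as the first argument — but by duality one can also use $M^*$) is Tor-rigid, or more precisely that the pair $(M,M)$ is rigid. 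In the even-dimensional case (part 3), the relevant fact is that $\theta^R$ is \emph{identically zero} on a hypersurface of even dimension (by Hochster's vanishing, or because $\theta$ takes values in a group that is torsion/trivial in even dimension — this is the even-dimensional vanishing of the Herbrand difference), so every module locally free on the punctured spectrum is Tor-rigid against everything locally free on the punctured spectrum, in particular the pair $(M,M)$ is rigid; then $\Ext^1_R(M,M)=0$ with $M$ rigid forces $M$ free by Theorem~\ref{Jothi}. For part (4) one does not have isolated singularity but has the hypothesis $[M]=0$ or $[M^*]=0$ in $\overline{G}(R)_{\mathbb Q}$; combined with local freeness on the punctured spectrum this again yields vanishing of $\theta^R(M,-)$ (or $\theta^R(M^*,-)$), hence rigidity of the relevant pair, and the same Ext-vanishing-plus-rigidity argument via Theorem~\ref{Jothi} finishes it (using $\Ext^1_R(M,M)=0 \iff \Ext^1_R(M^*,M^*)=0$ to switch between $M$ and $M^*$ if needed). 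The main obstacle across (3) and (4) is verifying that the $\theta$-invariant (equivalently, the obstruction to rigidity) genuinely vanishes under these hypotheses — this is exactly the content of the cited rigidity theorems for hypersurfaces, and I would simply invoke them rather than reprove the vanishing of $\theta$.
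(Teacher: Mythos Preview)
The paper's own proof of this theorem is nothing more than a list of citations: parts (1) and (2) are Theorem~4.1 of \cite{Da1}, part~(3) is Corollary~4.4 of \cite{Da2}, and part~(4) is Theorem~3.4 of \cite{Da2}. Your proposal goes further and tries to sketch the arguments behind those references. For parts~(1), (2), and~(4) your outline is essentially the right one---these results do rest on the $\theta$-pairing, its biadditivity on $\overline{G}(R)_{\mathbb Q}$, and the fact (under the equicharacteristic/unramified hypothesis) that $\theta^R(M,N)=0$ forces Tor-rigidity of the pair. One caution on part~(4): Theorem~\ref{Jothi} as stated requires $N$ to be Tor-rigid as a \emph{module}, not merely that the pair $(M,M)$ be rigid, and without isolated singularity a high syzygy of an arbitrary test module need not be locally free on the punctured spectrum; the actual argument in \cite{Da2} handles this more carefully than your sketch suggests.

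Your argument for part~(3), however, has a genuine gap. You assert that ``$\theta^R$ is identically zero on a hypersurface of even dimension,'' calling this ``the even-dimensional vanishing of the Herbrand difference.'' In the generality of the theorem (equicharacteristic \emph{or unramified} $S$) this is not a theorem---it is a conjecture. The paper makes this explicit in the remark following Theorem~\ref{mainhyper}: the author observes that if one \emph{knew} every module over such an even-dimensional hypersurface were Tor-rigid (equivalently $\theta\equiv 0$), then part~(2) of Theorem~\ref{mainhyper} would follow immediately from Theorem~\ref{Jothi}, and writes ``We conjecture this to be true in \cite{Da1}''; the result of \cite{MPSW} covers only the graded equicharacteristic-zero case. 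So the proof of part~(3) in \cite{Da2} cannot be the one you describe. It instead exploits a sign argument specific to $\theta(M,M^*)$ in even dimension, rather than global vanishing of $\theta$, to conclude freeness from $\Ext^1_R(M,M)=0$. You should consult \cite{Da2} directly to see how the parity hypothesis is actually used.
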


\begin{proof}
Part (1) and (2) are contained in Theorem 4.1 in \cite{Da1}). Part (3) is Corollary 4.4 in \cite{Da2}. Part (4) is  Theorem 3.4 in \cite{Da2}. 
\end{proof}

Next we shall discuss how to construct projective  resolution over an endomorphism ring. It was explained to us by Craig Huneke. The idea follows \cite{ARS}, as explained in \cite{Leu}, however we need a bit more details for our purpose.  For finite $R$-modules $M,N$ we shall write $N|M$ if $N$ is a direct summand of $M$.

\begin{construct}\label{cons}
Let $M$ be a finite $R$-module and $A=\Hom_R(M,M)$. It is well known that there is an equivalence between the categories of modules in $\add(M)$  and projective modules over $A$ via $\Hom_R(M,-)$ (see for example Lemma 4.12 in \cite{BD} or \cite{Leu}). It follows that any finite $A$-module $N$ fits into an exact sequence  $$  0 \to \Hom_R(M,N_1) \to \Hom_R(M,P_1) \to \Hom_R(M,P_0) \to N \to 0$$  

The above discussion show that when investigating projective resolutions of $A$-modules if suffices to consider modules of the form $\Hom_R(M,N)$. If $R|M$, one could build a resolution in a particularly nice way. First pick a minimal set of generators $f_1,\cdots,f_n$ of $\Hom_R(M,N)$ which includes a minimal set of generators of $\Hom_R(R,N)$. Let $\phi$ be the map $M^n \to N$ which takes $(m_1,\cdots,m_n)$ to $f_1(m_1) +\cdots + f_n(m_n)$. Clearly $\phi$ is surjective and $\Hom_R(M,\phi): A^{\oplus n} \to \Hom_R(M,N)$ is also surjective. In other words, one has the short exact sequences:

$$ 0 \to N_1 \to M^{\oplus n} \to N \to 0$$
and
$$0 \to \Hom_R(M,N_1) \to A^{\oplus n} \to \Hom_R(M,N) \to 0$$
Continuing in this fashion one can build an exact complex:  
$$\mathcal{F}:  \cdots \to M^{ n_{i+1}} \to M^{n_i} \to \cdots \to M^ {n_0} \to N \to 0$$
such that $\Hom_R(M,\mathcal{F})$ is an $A$- projective resolution of $\Hom_R(M,N)$.
\end{construct} 

\begin{cor}\label{remark1}
 $$\gl(A) \leq \sup \{\pd_A\Hom_R(M,N) | N\in \modu(R)\} + 2$$
\end{cor}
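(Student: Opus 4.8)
The plan is to feed Construction \ref{cons} directly into a syzygy count. The key observation is that, for an arbitrary finitely generated $A$-module $N$, its \emph{second} syzygy can always be taken to be of the form $\Hom_R(M,N_1)$ for some finitely generated $R$-module $N_1$. Indeed, by the equivalence between $\add(M)$ and projective $A$-modules, choose a projective presentation $\Hom_R(M,P_1)\to \Hom_R(M,P_0)\to N\to 0$ with $P_0,P_1\in\add(M)$; realize the first map as $\Hom_R(M,g)$ for some $g\colon P_1\to P_0$, and put $N_1=\ker g$. Since $\Hom_R(M,-)$ is left exact, $\Hom_R(M,N_1)=\ker\Hom_R(M,g)$, which is precisely the second syzygy of $N$. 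This yields the four-term exact sequence
$$0\to \Hom_R(M,N_1)\to \Hom_R(M,P_1)\to \Hom_R(M,P_0)\to N\to 0,$$
with the two middle terms projective over $A$.

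Breaking this into two short exact sequences and applying the usual estimate $\pd_A(-)$ along a short exact sequence gives $\pd_A N\le \pd_A\Hom_R(M,N_1)+2$. Writing $s$ for the supremum appearing in the statement, we conclude $\pd_A N\le s+2$ for every finitely generated $A$-module $N$. Since $A$ is module-finite over the Noetherian ring $R$, it is Noetherian and $\gl(A)$ equals the supremum of $\pd_A N$ over finitely generated $N$; hence $\gl(A)\le s+2$, as claimed.

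I do not expect a serious obstacle here: the only subtlety is the first step — verifying that the second syzygy of an \emph{arbitrary} finitely generated $A$-module is of the form $\Hom_R(M,N_1)$ — and this is exactly what Construction \ref{cons} has set up, via fully faithfulness of $\Hom_R(M,-)$ on $\add(M)$ and its left exactness. Everything else is a routine dimension-shifting argument.
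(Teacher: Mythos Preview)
Your proposal is correct and follows exactly the paper's approach: the paper's one-line proof simply observes that, by Construction \ref{cons}, every finite $A$-module has a second syzygy of the form $\Hom_R(M,N_1)$, and you have spelled out that observation (via fullness of $\Hom_R(M,-)$ on $\add(M)$ and left exactness) together with the routine dimension shift. There is nothing to add.
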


\begin{proof}
As in \ref{cons}, any $A$-module has a second syzygy of the form $\Hom_R(M,N)$. 
\end{proof}

\begin{cor}
Let $R$ be a local ring and $M$ be an $R$-module such that $R|M$ and $\gl \Hom_R(M,M)$ is finite. Then $\add(M)$ generate the Grothendieck group of $\mod(R)$.
\end{cor}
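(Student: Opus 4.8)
The plan is to prove the stronger statement that every finitely generated $R$-module admits a \emph{finite} resolution by modules in $\add(M)$; the assertion about the Grothendieck group then falls out of a telescoping computation.

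Set $A=\Hom_R(M,M)$ and $d=\gl A<\infty$, and fix a finitely generated $R$-module $N$. Because $R|M$, Construction \ref{cons} produces an exact complex
$$\mathcal F:\ \cdots\to M^{n_{i+1}}\to M^{n_i}\to\cdots\to M^{n_0}\to N\to 0$$
whose syzygy modules $N_0=N$, $N_{i+1}=\ker(M^{n_i}\to N_i)$ have the property that $\Hom_R(M,\mathcal F)$ is an $A$-projective resolution of $\Hom_R(M,N)$ with $i$-th syzygy $\Hom_R(M,N_i)$. Since $\gl A=d$, the $d$-th syzygy of this (not necessarily minimal) resolution is projective; that is, $\Hom_R(M,N_d)$ is a projective $A$-module.

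The one nontrivial point — and the step I expect to be the main obstacle — is that this forces $N_d$ itself to lie in $\add(M)$, not merely that $\Hom_R(M,N_d)$ be $A$-projective. Since $\Hom_R(M,-)$ restricts to an equivalence between $\add(M)$ and the projective $A$-modules (see \ref{cons}), there is $P\in\add(M)$ with $\Hom_R(M,P)\cong\Hom_R(M,N_d)$ as $A$-modules. Now write $M=R\oplus M'$ and let $e\in A$ be the idempotent projecting $M$ onto $R$; then $eAe\cong\End_R(R)=R$, and for every $R$-module $X$ the restriction map $\Hom_R(M,X)\cdot e\to\Hom_R(R,X)=X$ is an isomorphism of $eAe=R$-modules. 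Applying $(-)\cdot e$ to the isomorphism $\Hom_R(M,P)\cong\Hom_R(M,N_d)$ produces an $R$-module isomorphism $P\cong N_d$, so $N_d\in\add(M)$. (This is exactly where the hypothesis $R|M$ enters, via the corner ring $eAe\cong R$.)

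Finally, in the Grothendieck group $G(R)$ of $\modu R$ the short exact sequences $0\to N_{i+1}\to M^{n_i}\to N_i\to 0$ give $[N_i]=n_i[M]-[N_{i+1}]$, and telescoping over $i=0,\dots,d-1$ yields
$$[N]=\Big(\sum_{i=0}^{d-1}(-1)^i n_i\Big)[M]+(-1)^d[N_d].$$
Since $M$ and $N_d$ both lie in $\add(M)$, the class $[N]$ lies in the subgroup of $G(R)$ generated by the classes of modules in $\add(M)$; as $N$ was arbitrary, $\add(M)$ generates $G(R)$.
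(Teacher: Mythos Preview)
Your proof is correct and is precisely the argument the paper has in mind: the corollary is stated there without proof, as an immediate consequence of Construction~\ref{cons}, and you have supplied exactly those details. In particular, you correctly isolate the only point that needs care---that $\Hom_R(M,N_d)$ being $A$-projective forces $N_d\in\add(M)$, not merely that it be \emph{isomorphic} to $\Hom_R(M,P)$ for some $P\in\add(M)$---and your idempotent argument using $eAe\cong R$ (available because $R\mid M$) is the clean way to pass from an $A$-module isomorphism $\Hom_R(M,P)\cong\Hom_R(M,N_d)$ to an $R$-module isomorphism $P\cong N_d$.
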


\begin{rmk}
The above construction shows, as proved  by Leuschke (Theorem 6, \cite{Leu}), that if the ring $R$ has finite CM type, and one takes $M$ to be the direct sum of all the representatives of the indecomposable  MCM  modules, then $A=\Hom_R(M,M)$ will have finite global dimension. Thus, if $A$ is MCM itself, it will be an NCCR over $R$. In dimension $2$, $A$ would be automatically MCM, so this process works very well. However, in dimension $3$ or higher, $A$ is rarely MCM, and indeed  \ref{keyCor} indicates that we have to pick the non-$\Tor$-rigid modules among the indecomposable MCMs. We will push this idea further in the next section.
\end{rmk}

Finally, we discuss some sufficient conditions for an endomorphism ring to have finite global dimension. Our key condition is similar to the concept of cluster tilting (see, for example, \cite{BIKR}). 

\begin{prop}\label{global}
Let $R$ be a local Cohen-Macaulay ring of dimension $d$ and $M$ a MCM $R$-module which is locally  free in codimension $2$ and assume that $R|M$. Let $\mathcal X(M) =\{N \in \MCM R  | \Ext_R^1(M,N)=0 \}$. If $M\in \mathcal X(M)$ and $\mathcal X(M) \subseteq  \pe^n(M)$ for some $n$ then $A=\Hom_R(M,M)$ has finite global dimension at most $n+d+2$. 
\end{prop}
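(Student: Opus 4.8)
The plan is to use Construction \ref{cons} to reduce the computation of $\gl(A)$ to bounding $\pd_A \Hom_R(M,N)$ for arbitrary $N \in \modu R$, via Corollary \ref{remark1}; this costs us the additive constant $2$. So it suffices to show that $\pd_A \Hom_R(M,N) \leq n+d$ for every finite $R$-module $N$. First I would replace $N$ by a high syzygy: since $R$ is Cohen-Macaulay of dimension $d$ and $M$ is MCM, the $d$-th syzygy $N' = \syz^d_R(N)$ is MCM, and applying $\Hom_R(M,-)$ to the exact sequence building $N'$ from $N$ (the resolution is by free modules, and $R|M$, so these are in $\add M$) shows $\Hom_R(M,N)$ has a $d$-th syzygy over $A$ of the form $\Hom_R(M,N')$ with $N'$ MCM. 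Thus it is enough to prove $\pd_A \Hom_R(M,N') \leq n$ whenever $N'$ is MCM.

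Next I would show that for MCM $N'$ one can arrange, using the ``particularly nice'' resolution of Construction \ref{cons} (valid because $R|M$), that the syzygies $N'_i$ in the exact complex $\mathcal F : \cdots \to M^{n_1} \to M^{n_0} \to N' \to 0$ are again MCM and, crucially, lie in $\mathcal X(M)$, i.e. satisfy $\Ext^1_R(M,N'_i)=0$. The key point is that in that construction we chose generators of $\Hom_R(M,N')$ extending generators of $\Hom_R(R,N') = N'$, which forces the map $M^{n_0} \to N'$ to be surjective and, I expect, makes $\Hom_R(M,-)$ applied to $0 \to N'_1 \to M^{n_0} \to N' \to 0$ stay exact on the right — this is exactly the statement that the connecting map into $\Ext^1_R(M,N'_1)$ does not obstruct, equivalently that $\Hom_R(M,M^{n_0}) \to \Hom_R(M,N')$ is onto, which was verified in \ref{cons}. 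One then checks $N'_1$ is MCM (depth count in $0 \to N'_1 \to M^{n_0} \to N' \to 0$ with $M,N'$ MCM) and that $\Ext^1_R(M,N'_1)=0$; here I would use that $M$ is locally free in codimension $2$ together with Lemma \ref{useful} or a direct diagram chase, relating $\Ext^1_R(M,N'_1)$ to $\Ext^2_R(M,N')$ and iterating. So all syzygies of $\Hom_R(M,N')$ over $A$ are of the form $\Hom_R(M,-)$ of a module in $\mathcal X(M)$.

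Now I bring in the hypothesis $\mathcal X(M) \subseteq \pe^n(M)$. The point of the subcategory $\pe^n$ is that membership is built from $\add M$ by $n$ steps of taking two-term extensions, and the functor $\Hom_R(M,-)$ is left exact and sends $\add M$ to projective $A$-modules; moreover, because all the relevant modules lie in $\mathcal X(M)$ (so the relevant $\Ext^1_R(M,-)$ vanish), the short exact sequences witnessing $\pe$-membership stay short exact after applying $\Hom_R(M,-)$. Therefore if $C \in \pe^n(M)$ with all modules appearing in its ``$\pe$-resolution tower'' lying in $\mathcal X(M)$, then $\Hom_R(M,C)$ has projective dimension at most $n$ over $A$ — this is proved by induction on $n$, the base case $n=0$ being that $\add M \leftrightarrow \text{proj } A$, and the inductive step splicing the projective resolutions of the two outer terms of a two-term exact sequence. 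Applying this to $C = N'$ (which is in $\mathcal X(M)$ since $N'$ is MCM — wait, more precisely to the syzygy modules, but the same estimate gives $\pd_A \Hom_R(M,N') \le n$) completes the bound. Combining: $\gl A \le n + d + 2$.

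The main obstacle I anticipate is the middle step: verifying that the syzygy modules $N'_i$ produced by Construction \ref{cons} genuinely stay inside $\mathcal X(M)$, i.e. that the $\Ext^1_R(M,N'_i)$ vanish, and that the sequences remain exact after $\Hom_R(M,-)$ so that the $\pe^n$-membership can actually be propagated through the resolution. This is where the hypotheses ``$M$ locally free in codimension $2$'' and ``$M \in \mathcal X(M)$'' must be used carefully — the former to control $\Ext$ modules via Lemma \ref{useful}-type arguments, the latter to ensure $\add M$ itself consists of ``good'' modules so that the inductive tower does not leave $\mathcal X(M)$. The combinatorics of matching the $\pe^n$ filtration with the $A$-projective resolution, while conceptually clear, will require some care to state precisely.
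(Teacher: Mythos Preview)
Your overall architecture is right --- reduce via Corollary \ref{remark1}, take $d$ syzygies to land in $\mathcal X(M)$, then use the $\pe^n$ hypothesis --- but there is a genuine gap in the first step and an unnecessary complication in the second.

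The gap is your use of a \emph{free} resolution of $N$. Applying $\Hom_R(M,-)$ to $0 \to N_1 \to R^{a_0} \to N \to 0$ gives
\[
0 \to \Hom_R(M,N_1) \to \Hom_R(M,R^{a_0}) \to \Hom_R(M,N) \to \Ext^1_R(M,N_1) \to \cdots,
\]
and there is no reason for the middle map to be surjective; so $\Hom_R(M,\syz_R^d N)$ is \emph{not} the $d$-th $A$-syzygy of $\Hom_R(M,N)$. The whole point of Construction \ref{cons} is that the surjections $M^{n_i}\twoheadrightarrow N_i$ are chosen so that $\Hom_R(M,-)$ stays surjective. The paper simply applies Construction \ref{cons} to $N$ from the outset: one obtains $0 \to N_d \to M^{n_{d-1}} \to \cdots \to M^{n_0} \to N \to 0$ with $\Hom_R(M,-)$ exact on it, so $\Hom_R(M,N_d)$ really is the $d$-th $A$-syzygy and $N_d$ is MCM by depth count.

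Your middle step is then both vaguer and harder than necessary. You try to show each intermediate $N'_i$ lies in $\mathcal X(M)$; the paper only needs this for $N_d$, and gets it in one stroke. Since $M\in\mathcal X(M)$ and $M$ is locally free in codimension $2$, Lemma \ref{useful} gives that $A=\Hom_R(M,M)$ is $(\Se_3)$. Hence $\Hom_R(M,N_d)$, being a $d$-th syzygy in a complex of copies of $A$, is itself $(\Se_3)$; now the other direction of Lemma \ref{useful} (with $N_d$ MCM, hence $(\Se_2)$) forces $\Ext^1_R(M,N_d)=0$, i.e.\ $N_d\in\mathcal X(M)\subseteq \pe^n(M)$. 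Finally, your $\pe^n$ induction is correct but you impose a superfluous condition: you do \emph{not} need all modules in the $\pe$-tower to lie in $\mathcal X(M)$. In each witnessing sequence $0\to A\to C\to B\to 0$ or $0\to A\to B\to C\to 0$ one has $A\in\add M$, and $\Ext^1_R(M,A)=0$ follows immediately from the single hypothesis $M\in\mathcal X(M)$; that alone makes $\Hom_R(M,-)$ exact on the sequence and the induction on $n$ goes through.
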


\begin{proof}
First, we shall prove via induction on $n$ that for any module  $C \in \pe^n(M)$, $\pd_A\Hom_R(M,C) \leq n$. The case $n=0$ is obvious. 
Suppose we proved our claim for $n=l$. Pick $C \in  \pe^{l+1}(M) $ and we may assume $C$ fits into one of the two sequences:
$$ 0 \to A \to C \to B \to 0$$
$$ 0 \to A \to B \to C \to 0$$
with $A \in \add(M)$ and $B\in \pe^{l}(M)$. But by assumption $\Ext_R^1(M,A)=0$, so either sequence remains exact after applying $\Hom(M,-)$. By induction hypotheses we are done. 

It suffices to prove that for any $R$-module $N$, $\Hom_R(M,N)$ has finite projective dimension at most $n+d$ over $A$, see Remark \ref{remark1}. By Lemma \ref{useful} we know that $A$ is $(\Se_3)$. From Construction \ref{cons} one can build an exact sequence:

$$\mathcal{F}:  0 \to N_d \to M^{ n_{d-1}} \to \cdots \to M^ {n_0} \to N \to 0$$
which remains exact when applying $\Hom_R(M,-)$. It follows that $N_d$ is MCM and  $\Hom_R(M,N_d)$ is $(\Se_3)$. Lemma \ref{useful} tells us that $\Ext_R^1(M,N_d)=0$, so $N_d \in \mathcal X$. By the claim, $\pd_A\Hom_R(M,N_d) \leq n$, so we are done. 

\end{proof}

\begin{rmk}
If $R$ is Gorenstein and $A$ is MCM, then we only need to assume $\mathcal X \subseteq  \pe^{\infty}(M)$ to conclude that the global dimension of $A$ is exactly $d$, by \cite{V1}, proof of Lemma 4.2. 
\end{rmk}

\section{NCCR over hypersurfaces}\label{hyper}

In the case of hypersurfaces, one can say a lot more about NCCRs due to recent results on $\Tor$-rigidity. Throughout this section we assume that $R$ is an abstract hypersurface, i.e. that $\hat R \cong S/(f)$ where $S$ is a regular local ring and $f \in S$ is a regular element.

\begin{thm}\label{mainhyper}
Let $R$ be a local hypersurface satisfying condition $(\Reg_2)$. Assume that $\hat R \cong S/(f)$ where $S$ is an  equicharacteristic or unramified  regular local ring and $f \in S$ is a regular element.  
\begin{enumerate}
\item If $\dim R=3$ and the class group of $R$ is finite (i.e., $R$ is $\mathbb Q$-factorial), then $R$ admits no NCCR.  
\item If $R$ has isolated singularity and $\dim R$  is an even number greater than $3$, then $R$ admits no NCCR. 
\item Let $\mathcal N$ be the set of isomorphism classes of indecomposable maximal Cohen-Macaulay  (MCM) modules over $R$ which are not  $\Tor$-rigid. Assume that $\mathcal N$ is not empty. Let $M$ a module such that $R|M$ and 
$ \mathcal N \subset \pe^n(M)$ for some $n$.  If $A =\Hom_R(M,M)$ is $(\Se_3)$, then it has finite global dimension at most $\dim R+n$. In particular, if $A$ is MCM, then it is a NCCR over $R$. 
\end{enumerate} 
\end{thm}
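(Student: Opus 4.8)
Since $R$ is a hypersurface it is Cohen--Macaulay and Gorenstein, hence satisfies $(\Se_n)$ for all $n$; together with $(\Reg_2)$ and Serre's criterion this makes $R$ a normal domain, so $\Cl(R)$ is defined, and we may assume $R$ is singular (otherwise $R$ is its own NCCR). \emph{Parts (1) and (2).} Suppose $M$ gives an NCCR, so $M$ is reflexive and $A=\Hom_R(M,M)$ is MCM, hence $(\Se_3)$. For (1): in dimension $3$, $(\Reg_2)$ forces $\Sing R=\{\mathfrak m\}$, so Theorem~\ref{rigid}(2) applies and every finite $R$-module is $\Tor$-rigid; then Corollary~\ref{keyCor} shows $M$ has no non-free summand, i.e.\ $M$ is free, so $A$ is Morita equivalent to $R$ and $\gl A<\infty$ would force $R$ regular --- a contradiction. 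For (2): by Theorem~\ref{rigid}(3), when $\dim R$ is even and $>3$ the fact that $\Hom_R(M,M)$ is $(\Se_3)$ forces $M$ free, and we conclude as before.

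\emph{Part (3).} The plan is to run the argument of Proposition~\ref{global} in a sharpened form, taking $M$ to be MCM (the relevant case, and what the depth bookkeeping needs). Since $A=\Hom_R(M,M)$ is $(\Se_3)$, $M$ is $(\Se_2)$, and $M$ is locally free in codimension $2$ (being MCM over an $(\Reg_2)$ ring), Lemma~\ref{useful} gives $\Ext^1_R(M,M)=0$, so $M\in\mathcal X(M):=\{N\in\MCM R\mid\Ext^1_R(M,N)=0\}$. As $\mathcal N\neq\emptyset$, $R$ is not regular, so $\dim R\ge3$. Given any finitely generated $A$-module $L$, Construction~\ref{cons} (using $R\mid M$) presents its second $A$-syzygy as $\Hom_R(M,N_1)$ with $N_1=\ker(P_1\to P_0)$, $P_0,P_1\in\add(M)$; since $P_0,P_1$ are MCM, $\depth_R N_1\ge2$. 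Iterating Construction~\ref{cons}, the subsequent $A$-syzygies are $\Hom_R(M,N_j)$ for a chain of $M$-syzygies $N_1,N_2,\dots$ with $\depth_R N_j\ge\min\{\dim R,\,j+1\}$, so after $\dim R-2$ further steps one reaches an MCM module $\widetilde N$ with $\Hom_R(M,\widetilde N)=\syz_A^{\dim R}(L)$. Being a deep syzygy over the $(\Se_3)$-ring $A$ (and $\dim R\ge3$), this module is again $(\Se_3)$, so Lemma~\ref{useful} yields $\Ext^1_R(M,\widetilde N)=0$, whence $\widetilde N\in\mathcal X(M)$. Assuming the inclusion $\mathcal X(M)\subseteq\pe^n(M)$ (see below), the argument of Proposition~\ref{global} --- using $M\in\mathcal X(M)$ to keep the defining sequences of $\pe^n$ exact under $\Hom_R(M,-)$ --- gives $\pd_A\Hom_R(M,\widetilde N)\le n$, hence $\pd_A L\le n+(\dim R-2)+2=\dim R+n$. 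As $L$ was arbitrary, $\gl A\le\dim R+n$; and if $A$ is MCM then $M$ is reflexive (MCM over a Gorenstein ring), so $A$ is an NCCR.

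\emph{The main obstacle.} It remains to deduce $\mathcal X(M)\subseteq\pe^n(M)$ from $\mathcal N\subseteq\pe^n(M)$. Since $R\mid M$, every free indecomposable summand of any module lies in $\add(M)\subseteq\pe^n(M)$, and $\pe^n(M)$ is closed under summands; so it suffices to prove that every non-free indecomposable MCM summand $N'$ of a module $N$ with $\Ext^1_R(M,N)=0$ fails to be $\Tor$-rigid, so that $N'\in\mathcal N$. Suppose toward a contradiction that such an $N'$ is $\Tor$-rigid. Since $N$ is MCM, hence $(\Se_3)$, the second part of Lemma~\ref{useful} shows $\Hom_R(M,N)$, and therefore its summand $\Hom_R(M,N')$, is $(\Se_3)$; by Theorem~\ref{Jothi} the canonical map identifies $\Hom_R(M,N')$ with $M^{*}\tensor_R N'$, which is thus reflexive. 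One now wants to conclude that $N'$ is free, contradicting the choice of $N'$: here the recent structure theory of $\Tor$-rigidity over hypersurfaces (Theorem~\ref{rigid} and the methods of \cite{Da1,Da2}) must be invoked --- reflexivity of $M^{*}\tensor_R N'$ together with $\Tor$-rigidity of $N'$ ought to force $\Tor^R_{>0}(M^{*},N')=0$, and then the depth/change-of-rings relations available over a hypersurface, combined with a second application of Theorem~\ref{Jothi} to the pair $(N',N')$ (giving $\Ext^1_R(N',N')=0$), yield that $N'$ is free. Carrying this out --- in particular under only the hypothesis $(\Reg_2)$, without assuming an isolated singularity --- is the step I expect to be the hardest.
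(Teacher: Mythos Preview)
Your treatment of parts (1) and (2) and your overall architecture for part (3) match the paper's; your depth bookkeeping is in fact tidier than the paper's statement of Proposition~\ref{global}, and correctly yields the bound $\dim R + n$ rather than $\dim R + n + 2$.

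The genuine gap is exactly where you flagged it, but your proposed fix points in the wrong direction. You suggest invoking Theorem~\ref{rigid} and the methods of \cite{Da1,Da2}, and then a second application of Theorem~\ref{Jothi} to the pair $(N',N')$. Neither works here: Theorem~\ref{rigid} needs an isolated singularity (or local freeness on the punctured spectrum), which part (3) does not assume; and nothing you have written produces $\Ext^1_R(N',N')=0$, so Jothilingam's criterion for $(N',N')$ never fires. The paper's argument at this point is short and elementary, and uses only $(\Reg_2)$.

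Here is what you are missing. Suppose $K\in\mathcal X(M)$ is indecomposable, non-free, and (for contradiction) $\Tor$-rigid. You already have $\Hom_R(M,K)\cong M^{*}\otimes_R K$ is $(\Se_3)$, hence torsion-free over the domain $R$. Now embed $M^{*}$ into a free module of the same rank, $0\to M^{*}\to G\to L\to 0$, so that $L$ is torsion. Tensoring with $K$ gives an injection $\Tor_1^R(L,K)\hookrightarrow M^{*}\otimes_R K$; the left side is torsion (since $L$ is), the right side torsion-free, so $\Tor_1^R(L,K)=0$. $\Tor$-rigidity of $K$ then kills $\Tor_{\ge 1}^R(L,K)$, and the long exact sequence gives $\Tor_{\ge 1}^R(M^{*},K)=0$. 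Since $\mathcal N\neq\emptyset$ and $\mathcal N\subset\pe^n(M)$, the module $M$ is not free, hence $M^{*}$ has infinite projective dimension; the Huneke--Wiegand theorem \cite{HW1} for hypersurfaces (vanishing of all high $\Tor$ forces one factor to have finite projective dimension) then yields $\pd_R K<\infty$, so the MCM module $K$ is free --- the desired contradiction. No isolated-singularity hypothesis is needed, and the only hypersurface-specific input is \cite{HW1}, not Theorem~\ref{rigid}.
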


\begin{proof}
Part (1) and (2) follow from  Theorems \ref{Jothi} and \ref{rigid}. 
It is left to  prove part (3). We may assume $d=\dim R\geq 3$. By Lemma \ref{useful} and Proposition \ref{global}, we just need to show that any non-free, indecomposable module in $\mathcal X(M)$ also belongs to   $\mathcal N$.  Pick such $K$  in $\mathcal X(M)$. We have  $\Ext_R^1(M,K)=0$ and $\Hom_R(M,K)$ is $(\Se_3)$. By  Theorem \ref{Jothi}, $M^*\tensor_R K$ is $(\Se_3)$. But one can embed $M^*$ into a free module:
$0 \to M^* \to G \to L \to 0$ such that $L$ is torsion. Tensoring with $K$ and using the fact that   $M^*\tensor_R K$ is $(\Se_3)$ forces $\Tor_1^R(L,K) =0$. As $K$ is $\Tor$-rigid, $\Tor_i^R(M^*,K)=0$ for all $i>0$. Since $R$ is a hypersurface and $M$ is not free, it now follows that $K$ is free (see \cite{HW1}), a contradiction.

\end{proof}

\begin{rmk}
In the situation of  part (1), being $\mathbb Q$-factorial and factorial are actually the same for $R$. This is known for the equicharacteristic case. We will prove  the unramified case in a forthcoming paper. 
\end{rmk}

\begin{rmk}
The conclusion of part (1), if we further assume that $R$ has terminal singularity and  the ground field is the complex numbers, can be explained using Van den Bergh results and standard facts of birational geometry. Namely, by Theorem 6.6.3 of \cite{V1}, $R$ has a projective crepant resolution $Y\to \Spec(R)$, which has  to be a small resolution (i.e. the fibre of the closed point has dimension  at most $1$). Then the pushforward of the hyperplane section on $Y$ can not be $\mathbb Q$-Cartier, so the class group of $R$ can not be torsion.  We thank Tomasso de Fernex for explaining this fact to us. 
\end{rmk}

\begin{rmk}
Part (2) can be proved  directly using Theorem \ref{Jothi} if one knows that over such hypersurface, any module is $\Tor$-rigid. We conjecture this to be true in \cite{Da1}. Recently, a proof of our conjecture in  the graded, equicharacteristic $0$ case using complex-analytic method by Walker, Moore, Piepmeyer and Spiroff has been announced, see \cite{MPSW}. 
\end{rmk}

\begin{eg}
There are examples of isolated hypersurface singularities in all dimensions which admits projective crepant resolutions: let $k$ be an algebraically closed field of characteristic $0$ and 
$R=k[[x_0,x_1,\cdots,x_n]]/(f)$ with $f=(x_0^l+ x_1^n+\cdots+x_n^n) $ and $l>n$ an integer such that  $l\equiv 1\mod n$  (see \cite{Lin}, Theorem A.4). Thus, our result shows that extra conditions will be needed for the equivalence of the existence of two definitions of crepant resolutions, in higher dimensions. 
\end{eg}

\begin{eg}\label{simple}
Let $k$ be an algebraically closed field of characteristic $0$. We now apply Theorem \ref{mainhyper} to  analyze some simple  singularities of dimension $3$ over $k$. They are well-known to be hypersurfaces of type $A_n,D_n, E_6, E_7$ or $E_8$. But the types $A_{2l}, E_6, E_8$ are factorial (in fact $\overline{G}(R)_{\mathbb Q}=0$), so they admit no NCCR. We now study the case $A_{2l+1} = k[[x,y,u,v]]/(xy+u^2-v^{2l+2})$. There are exactly $l+3$ indecomposable MCM modules up to isomorphism: $R$, $I=(x, u+v^l)$, $I^*$ and $l$ modules $M_1,\cdots,M_l$ of ranks $2$.

We claim that all the modules $M_i$ are $\Tor$-rigid. By Theorem \ref{rigid}, it is enough to show that each $[M_i]$ is $0$  in $\overline{G}(R)_{\mathbb Q}$. By Kn\"{o}rrer periodicity result (see \cite{K} and \cite{Yo}, Chapter 12), one could prove that fact by looking at dimension $1$, that is $R=k[[u,v]]/(u^2-v^{2l+2})$. In this case, $M_i$ is the first syzygy of the ideal $L_i=(u,v^i)$. As $R/L_i$ has finite length, $[R/L_i]=0$ in $\overline{G}(R)_{\mathbb Q}$, and so is the class of its second syzygy $M_i$. Remark \ref{class} now shows
that $I$ and $I^*$ are the only non-free indecomposable MCM modules which are not $\Tor$-rigid. But $I$ is the first syzygy of $I^*$, so $I^* \in \pe^1(M)$, where $M = R\oplus I$. Since $A = \Hom_R(M,M) = R^2\oplus I\oplus I^*$ is MCM, it gives a NCCR by part (3) of Theorem \ref{mainhyper}. Obviously one can also take $M=R\oplus I^*$.

One could also prove that $A$ is NCCR as follows. Every $M_i$  fits into an exact sequence $0 \to I \to M_i \to I^* \to 0$ (it can be shown by computing the length of $\Ext_R^1(I^*,I)$). Since $I$ is the first syzygy of $I^*$, it follows that every MCM module is in $\pe^2(M)$.  Then we again apply part (3) of  Theorem \ref{mainhyper}.
 
\end{eg}

\begin{rmk}
The NCCRs over simple singularities have been completely analyzed in \cite{BIKR} via different techniques. We will use the methods described here to study rigid and cluster tilting objects, in the sense of \textit{loc. cit.}, in a separate paper. 
\end{rmk}

\section{Obstructions to non-commutative crepant resolutions over complete intersections}\label{ci}

In this section we shall extend some results of the last section to the case of complete intersections, that is, rings whose completion are isomorphic to $S/(f_1,\cdots,f_n)$ with $S$ a regular local ring and the $(f_1,\cdots,f_n)$ from a regular sequence in $S$. Typically we would need to assume some mildly good depth conditions on the module $M$ which gives rise to a NCCR. That is because $\Tor$-rigidity has not been very well-understood in this generality.

The following observation partly generalizes  Grothendieck well-known theorem that  the classs group of a complete intersection which is $(\Reg_3)$ is trivial. It also places some serious restrictions
on NCCRs of complete intersections whose singular locus have codimension at least $4$.
\begin{prop}
Let $R$ be an excellent local complete intersection satisfying regularity condition $(\Reg_3)$. Suppose that $M$ satisfies $(\Se_3)$. Then $\Hom(M,M)$ is  $(\Se_4)$ if and only if $M$ is free.  
\end{prop}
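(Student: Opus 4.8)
The statement is the complete-intersection analogue of Corollary~\ref{keyCor}, so I would follow the same strategy: the ``if'' direction is trivial ($\Hom_R(R^a,R^a)$ is free, hence $(\Se_4)$), so everything is in the ``only if'' direction. First I would reduce to the case where $M$ is a non-free reflexive module and derive a contradiction. Since $R$ is $(\Reg_3)$ and $M$ satisfies $(\Se_3)$, the module $M$ is reflexive and locally free on all primes of codimension $\le 3$; in particular $M$ is locally free in codimension $2$ in the sense needed for Lemma~\ref{useful}. Applying Lemma~\ref{useful} with $n=3$ and $N=M$ (note $M$ is $(\Se_3)$, and $\Hom_R(M,M)$ is $(\Se_4)=(\Se_{n+1})$ by hypothesis), I get $\Ext^i_R(M,M)=0$ for $1\le i\le 2$. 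The point of pushing to $(\Se_4)$ rather than $(\Se_3)$ is exactly to gain this extra vanishing $\Ext^2_R(M,M)=0$, which is what is needed over a codimension-two complete intersection where $\Tor$-rigidity may genuinely fail.

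\textbf{Second step: localize and use that $R$ is a complete intersection.} After localizing at a prime in the support of the (finite length, by the standard reduction) obstruction, one reduces to showing: if $R$ is a complete intersection, $M$ is reflexive, locally free on the punctured spectrum, and $\Ext^1_R(M,M)=\Ext^2_R(M,M)=0$, then $M$ is free. The natural tool here is the theory of cohomology operators / support varieties over complete intersections, or more elementarily the rigidity results for complete intersections of low codimension. A clean route: by Theorem~\ref{Jothi} (Jothilingham) the vanishing $\Ext^1_R(M,M)=0$ together with $\Tor$-rigidity of $M$ would already force $M$ free; but $M$ need not be $\Tor$-rigid over a complete intersection. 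Instead I would use the vanishing of \emph{two} consecutive Ext's: over a complete intersection $S/(f_1,\dots,f_c)$, a module with $\Ext^i_R(M,M)=0$ for $c$ consecutive values of $i$ is well understood, and when the singular locus is small one can climb down the codimension. Concretely, I expect to invoke a rigidity statement of Jothilingham/Huneke--Wiegand type in the complete-intersection setting (or Murthy-type results on the triviality of the class group in codimension $3$) to conclude that the first syzygy of $M$ is free, hence $M$ has finite projective dimension, hence is free since $M$ is MCM-ish (it is $(\Se_3)$ and $R$ is CM, so on the relevant localizations $M$ is free exactly when $\pd$ is finite).

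\textbf{Third step: the class-group input.} An alternative and probably cleaner finish, mirroring Remark~\ref{class}: Grothendieck's theorem says $\Cl(R)=0$ when $R$ is a complete intersection that is $(\Reg_3)$, so any reflexive \emph{rank-one} $M$ is already free. For higher rank one passes to $\Wedge^{\rk M} M$ (its reflexive hull), which is rank one, hence free, and then uses that $\Ext^1_R(M,M)=\Ext^2_R(M,M)=0$ plus the hypotheses to propagate freeness from the determinant back to $M$ — this is where the codimension-$3$ regularity and the $(\Se_3)$/$(\Se_4)$ conditions are spent.

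\textbf{Main obstacle.} The delicate point is precisely the last implication: going from ``$\Ext^1_R(M,M)=\Ext^2_R(M,M)=0$ and $M$ locally free off a codimension-$4$ locus'' to ``$M$ is free,'' without a $\Tor$-rigidity hypothesis on $M$ itself. This is exactly the place where the complete-intersection structure (cohomology operators, the fact that high syzygies are eventually periodic-like in a controlled way, or the Huneke--Wiegand–style rigidity over hypersurfaces extended one codimension at a time through the $f_i$) must be brought in, and it is why the statement requires $(\Reg_3)$ rather than just $(\Reg_2)$ and $(\Se_4)$ rather than $(\Se_3)$: one needs enough regularity that, after cutting by a regular sequence inside $S$, the module becomes a module over a hypersurface to which the results of Section~\ref{hyper} apply, combined with the vanishing $\Ext^2_R(M,M)=0$ to control the descent. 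I would expect the author's proof to either cite the appropriate results from \cite{Da1} or \cite{Da2} in the complete-intersection case, or to run an induction on the codimension $c$ of the complete intersection, using the hypersurface case (Corollary~\ref{keyCor}, or Theorem~\ref{rigid}(3)(4)) as the base and the extra Ext-vanishing as the inductive fuel.
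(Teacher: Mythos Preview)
Your first step is exactly right and matches the paper: using Lemma~\ref{useful} with $n=3$ (which applies since $M$ is $(\Se_3)$ and, by $(\Reg_3)$, locally free in codimension $3$) to extract $\Ext^1_R(M,M)=\Ext^2_R(M,M)=0$. You also correctly isolate the crux: deducing freeness of $M$ from these two Ext vanishings.

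Where you go wrong is in dismissing $\Tor$-rigidity. You write ``$M$ need not be $\Tor$-rigid over a complete intersection'' and then branch into cohomology operators, a determinant/class-group argument, and induction on the codimension, none of which you make precise. The paper's point is precisely that $M$ \emph{is} $\Tor$-rigid here, and the two Ext vanishings are what buy this. After completing (harmless since $R$ is excellent), write $R=S/(f_1,\dots,f_c)$ with $S$ regular local. The vanishing $\Ext^2_R(M,M)=0$ is the obstruction to lifting $M$ through one $f_i$ (Auslander--Ding--Solberg, \cite{ADS}); the simultaneous vanishing of $\Ext^1$ and $\Ext^2$ propagates along each lift by a Nakayama argument on the long exact sequence, so $M$ lifts all the way to a module $N$ over $S$. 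Over the regular ring $S$ every module is $\Tor$-rigid (Lichtenbaum, \cite{Lich}), and since $\Tor_i^R(M,-)\cong\Tor_i^S(N,-)$ for $R$-modules, $M$ is $\Tor$-rigid over $R$. Now Jothilingham's Theorem~\ref{Jothi} applies directly and gives $M$ free. This is packaged in the paper as a citation to Jorgensen \cite{Jor}, Proposition~2.5, not to \cite{Da1} or \cite{Da2} as you guessed.

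So the gap is a missing idea, not a wrong computation: you had all the ingredients (Lemma~\ref{useful}, Theorem~\ref{Jothi}) but did not see that the extra vanishing $\Ext^2_R(M,M)=0$ is exactly what unlocks a lift of $M$ to the ambient regular ring, restoring $\Tor$-rigidity. Your alternative routes are either vague (support varieties) or do not work as stated (freeness of the reflexivized top exterior power does not imply freeness of $M$).
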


\begin{proof}
Since $R$ is excellent we can complete without affecting the issues. So we may assume $R$ is $S$ modulo a regular sequence, where $S$ is a complete regular local ring. By Lemma \ref{useful} $\Ext_R^1(M,M) = \Ext_R^2(M,M)=0$. The desired result follows from Proposition 2.5 of \cite{Jor}. We give a quick explanation for completeness. Since $R$ is now complete, we can lift $M$ to a module $N$ over $S$ (see \cite{ADS}). Since over a regular local ring every module is $\Tor$-rigid (\cite{Lich}), we know that so is $M$ as $R$-module. Theorem \ref{Jothi} finishes the proof. 
\end{proof}

\begin{cor}\label{ciNCCR}
Let $R$ be an excellent local complete intersection satisfying condition $(\Reg_3)$. Suppose that $M$ is a reflexive $R$-module such that $A =\Hom_R(M,M)$ is a NCCR. Then $M$ can not satisfy $(\Se_3)$. 

\end{cor}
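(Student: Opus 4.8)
The plan is to show that Corollary \ref{ciNCCR} is an immediate consequence of the Proposition preceding it. The key point is that an NCCR $A = \Hom_R(M,M)$ is, by Van den Bergh's definition, a \emph{maximal Cohen-Macaulay} $R$-module, and in particular it satisfies Serre's condition $(\Se_n)$ for every $n$ --- including $(\Se_4)$. So first I would observe that if $A$ is an NCCR then $A$ is MCM, hence satisfies $(\Se_4)$.

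Next, I would suppose for contradiction that the reflexive module $M$ giving this NCCR also satisfies $(\Se_3)$. Then $R$ is an excellent local complete intersection with $(\Reg_3)$, $M$ satisfies $(\Se_3)$, and $\Hom_R(M,M) = A$ satisfies $(\Se_4)$ --- exactly the hypotheses of the Proposition. The Proposition then forces $M$ to be free.

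Finally, I would rule out the case $M$ free: if $M$ is free then $A = \Hom_R(M,M)$ is a matrix ring over $R$, which has finite global dimension if and only if $R$ does, i.e. if and only if $R$ is regular. But an NCCR is by definition a resolution of a \emph{singular} ring (or at least, if $R$ is regular the statement is vacuous/uninteresting, and one typically excludes it); more to the point, the existence of an NCCR is only a meaningful hypothesis when $R$ is not regular, and in the contrapositive phrasing one simply notes that ``$M$ free'' contradicts ``$A$ is an NCCR'' unless $R$ is regular, in which case the conclusion is vacuous. So under the standing assumption that $R$ is a genuine (non-regular) complete intersection, we get the desired contradiction, proving $M$ cannot satisfy $(\Se_3)$.

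The only real subtlety --- and the step I would be most careful about --- is the bookkeeping around the free case: one must be sure the statement is interpreted so that $R$ is not regular (otherwise every $M$ that is free trivially gives an NCCR and also trivially satisfies $(\Se_3)$, making the corollary false as literally stated). Assuming $R$ is singular, the argument is otherwise just ``NCCR $\Rightarrow$ MCM $\Rightarrow (\Se_4)$, then quote the Proposition.'' I expect no computational obstacle; the proof is a one-line deduction from the Proposition once the definition of NCCR is unwound.

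\begin{proof}
By definition of an NCCR, $A = \Hom_R(M,M)$ is a maximal Cohen-Macaulay $R$-module, so in particular it satisfies Serre's condition $(\Se_4)$. Suppose $M$ satisfies $(\Se_3)$. Then the hypotheses of the preceding Proposition are met, and we conclude that $M$ is free. But if $M$ is free then $A = \Hom_R(M,M)$ is a full matrix ring over $R$, which has finite global dimension only if $R$ is regular; since $R$ is assumed to be a genuine (non-regular) complete intersection, this contradicts condition (2) in the definition of an NCCR. Hence $M$ cannot satisfy $(\Se_3)$.
\end{proof}
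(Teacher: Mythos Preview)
Your proposal is correct and matches the paper's intended argument: the paper states Corollary~\ref{ciNCCR} without proof, as an immediate consequence of the preceding Proposition, and your deduction ``NCCR $\Rightarrow$ $A$ is MCM $\Rightarrow$ $A$ is $(\Se_4)$ $\Rightarrow$ $M$ free by the Proposition'' is exactly that. Your added care about the degenerate case where $M$ is free (forcing $R$ regular via Morita equivalence of $M_n(R)$ with $R$) is a legitimate point the paper leaves implicit; the paper clearly intends $R$ to be singular throughout, so your reading is the right one.
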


\begin{rmk}
In all known examples of NCCR, the module $M$ is actually MCM. The above Corollary  shows that this can not happen when $R$ is an excellent local complete intersection satisfying condition $(\Reg_3)$.
 \end{rmk}
Inspired by the above result, we shall study the issue of deforming NCCR.  We first prove a few lemmas, which should be known, but we can not find a convenient reference:

\begin{lem}\label{lengthLC}
Let $(S,m)$ be a excellent local ring and $M$ a finite $S$-module satisfying $(\Se_n)$. Then the local cohomology module
$\LC_m^n(N)$ has finite length. 
\end{lem}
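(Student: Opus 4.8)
The plan is to reduce the statement to the well-known fact that a module of dimension $0$ has finite length, via a localization argument at the punctured spectrum combined with the interpretation of the Serre condition $(\Se_n)$ in terms of the vanishing of low local cohomology locally. First I would recall that for a finite module $M$ over an excellent (hence, by completing or passing to a quotient of a regular ring, catenary with good behaviour of dimension) local ring $(S,m)$, the support of $\LC_m^n(M)$ is contained in $\{m\}$ precisely when $\LC_{\mathfrak p S_{\mathfrak p}}^n(M_{\mathfrak p}) = 0$ for every prime $\mathfrak p \neq m$ — but this is not quite the right phrasing, so more carefully: I would show directly that $\LC_m^n(M)$ is a module whose associated primes are all $m$, equivalently that it has finite length, and the key input is that $(\Se_n)$ forces the local cohomology to vanish in the appropriate range away from the closed point.

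The core step is the following. By Grothendieck's vanishing and non-vanishing theorems, for any prime $\mathfrak p$, $\LC_{\mathfrak p S_{\mathfrak p}}^i(M_{\mathfrak p}) = 0$ for $i < \depth_{S_{\mathfrak p}} M_{\mathfrak p}$. The hypothesis $(\Se_n)$ says $\depth_{S_{\mathfrak p}} M_{\mathfrak p} \geq \min\{n, \dim S_{\mathfrak p}\}$, so for a non-maximal prime $\mathfrak p$ in $\Supp M$ with $\dim S_{\mathfrak p} = c$, the local cohomology $\LC_{\mathfrak p S_{\mathfrak p}}^i(M_{\mathfrak p})$ vanishes for $i < \min\{n, c\}$; and since $\LC^i$ vanishes above the dimension, the only way $\LC^i_{\mathfrak p S_{\mathfrak p}}(M_{\mathfrak p})$ can be nonzero for some $i \le n$ is if $c > n$ forces $i = n < c$ with $\depth = n$ exactly — but in fact I want to say that $\LC_m^n(M)$ localized at $\mathfrak p$ is, up to a shift/spectral sequence comparison, governed by $\LC_{\mathfrak p S_{\mathfrak p}}^{j}(M_{\mathfrak p})$ for $j \le n$, and these all vanish for $\mathfrak p \ne m$ once $n < \dim S_{\mathfrak p}$ or $n \ge \dim S_{\mathfrak p}$ (the latter giving $M_{\mathfrak p}$ maximal Cohen–Macaulay over $S_{\mathfrak p}$). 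Either way one concludes $\Supp \LC_m^n(M) \subseteq \{m\}$, and a finite module — or more precisely an Artinian module, which $\LC_m^n(M)$ is — supported only at $m$ that is also cofinite would be finite length; the cleanest route is: $\LC_m^n(M)$ is Artinian, and an Artinian module over a local ring supported only at the maximal ideal has finite length. To make the support computation rigorous I would use the standard comparison $\LC_m^\bullet(M)_{\mathfrak p} \cong \LC_m^\bullet(M_{\mathfrak p})$ is wrong in general; instead I would induct on $\dim S$, using that $\LC_m^n$ of the generic-rank-free part behaves well, or appeal to the exact sequences relating $\LC_m^i(M)$ to $\LC_{m/\mathfrak p}^i$ after a prime filtration together with the fact that $(\Se_n)$ passes to the relevant subquotients — this is the routine bookkeeping I would not spell out.

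An alternative and perhaps smoother argument: reduce to the case where $S$ is complete (the formation of $\LC_m^n$ and the property "$(\Se_n)$" are insensitive to completion, and finite length is too), then $S$ is a quotient of a regular local ring. Now it suffices to prove that $\dim \LC_m^n(M) = 0$. One has the local duality statement, or one invokes the characterization that $\LC_m^i(M)$ has finite length for all $i < t$ if and only if $M$ satisfies $(\Se_t)$ on the punctured spectrum in the appropriate sense — but that would be circular with what we want, so I would not rely on it. Instead, the honest path is: for each $\mathfrak p \ne m$, the module $\LC_m^n(M)$ being Artinian means it is $m$-torsion, so its only possible associated prime is $m$; combined with Artinianness this gives finite length immediately, with no support computation needed at all. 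The subtlety — the one genuine point — is justifying that $\LC_m^n(M)$ is \emph{finitely generated up to} being $m$-torsion, which is automatic since $\LC_m^n(M)$ is always $m$-torsion by definition of local cohomology with support in $m$; so the real content must be that $\LC_m^n(M)$ is finitely generated (equivalently Noetherian), for which I would use Faltings' local-global principle or Grothendieck finiteness: $\LC_m^i(M)$ is finitely generated for $i < \operatorname{f}_m(M)$, the finiteness dimension, and $(\Se_n)$ forces $\operatorname{f}_m(M) > n$. That last implication — $(\Se_n) \Rightarrow \operatorname{f}_m(M) \ge n+1$, i.e. that $\LC_{\mathfrak p}^i(M_{\mathfrak p}) = 0$ for all $\mathfrak p \ne m$ and all $i \le n$ except possibly $i = \dim S_{\mathfrak p}$ which then still gives vanishing in the range $i \le n$ whenever $\dim S_{\mathfrak p} > n$, and MCM-ness handling $\dim S_{\mathfrak p} \le n$ — is exactly where the hypothesis is used, and is the step I expect to require the most care.

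I expect the main obstacle to be precisely this last comparison: translating $(\Se_n)$ into a clean statement about the finiteness dimension $\operatorname{f}_m(M)$ (or equivalently, a statement that $\LC_m^i(M)$ is finitely generated for $i \le n$, whence $\LC_m^n(M)$ is both finitely generated and $m$-torsion, hence of finite length). Once that is in place the conclusion is immediate. I would phrase the final write-up around Faltings' annihilator/local-global theorem to avoid a dimension induction, noting that the excellence hypothesis is exactly what guarantees the catenary and universally-good behaviour needed for these finiteness results to apply.
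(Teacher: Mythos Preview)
Your proposal wanders through several false starts before settling on the Faltings finiteness-dimension approach, and along the way you make a genuine slip: an Artinian $m$-torsion module over a local ring need \emph{not} have finite length (think of the injective hull $E(S/m)$), so the sentence ``combined with Artinianness this gives finite length immediately'' is simply wrong. You do catch yourself and move on to the correct reformulation---show that $\LC_m^n(M)$ is finitely generated, hence, being $m$-torsion, of finite length---but the earlier paragraph should be deleted rather than left in as a detour.

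Your final route via Faltings' local--global principle (showing $(\Se_n)$ forces the finiteness dimension $f_m(M)\ge n+1$) is legitimate, but it is both heavier and different from what the paper does. The paper's argument is short and uses only local duality over a regular ring: complete $S$ and present it as a quotient of a regular local ring $(T,\mathfrak m)$ of dimension $d$; local duality identifies finite length of $\LC_m^n(M)$ with finite length of $\Ext_T^{d-n}(M,T)$; then for any non-maximal $q\in\Spec T\cap\Supp M$ one localizes and applies local duality over $T_q$ to see $\Ext_{T_q}^{d-n}(M_q,T_q)$ is Matlis dual to $\LC_{qT_q}^{\dim T_q-d+n}(M_q)$, which vanishes by the depth bound coming from $(\Se_n)$ since $\dim T_q-d+n<n$. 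This is exactly the computation you defer as ``the step I expect to require the most care,'' but done with local duality rather than by quoting Faltings. Since Faltings' theorem is itself proved via dualizing complexes and essentially the same localization, your approach packages the paper's argument inside a black box; the paper's version is more self-contained and makes the role of excellence (to pass to a quotient of a regular ring) transparent.
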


\begin{proof}
We may complete and assume that $S$ is the homomorphic image of a regular local ring $(T,\mathfrak m)$. Let $d= \dim T$. Local duality over $T$ says that $\LC_m^n(M) = \LC_{\mathfrak m}^n(M)$ has finite length if and only if $\Ext_T^{d-n} (M,T)$ has finite length. Localize at any non-maximal $q \in \Spec (T) \cap \Supp(M)$. The module  $\Ext_T^{d-n} (M,T)_q \cong \Ext_{T_q}^{d-n} (M_q,T_q) $ is dual to $\LC_{qT_q}^{\dim T_q-d+n} (M_q)$ which is $0$ since $M$ is  $(\Se_n)$. 
\end{proof}

\begin{lem}\label{liftS_n}
Let $(S,m)$ be an excellent local ring and $N$ be a $S$-module. Suppose $f\in m$ is a regular element on $S$ and $N$. Let $R=S/(f)$ and $M=N/(f)$. Let $n$ be an integer. 
\begin{enumerate}
\item If $M$ is free in codimension $n$, then so is $N$.
\item If $M$ satisfies $(\Se_n)$, then so does $N$.
\item If $K$ is an $S$-module such that $f$ is $K$-regular and  $\Ext_R^1(M,K/(f))=0$ then $\Ext_S^1(N,K)=0$ and $\Hom_R(M,K/(f)) \cong \Hom_S(N,K)/(f)$.
\end{enumerate}
\end{lem}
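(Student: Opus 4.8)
The plan is to prove the three parts essentially independently, using the standard depth-chasing trick afforded by the regular element $f$: for any finite $S$-module $P$ with $f$ regular on $P$, one has $\depth_{S_q} P_q = \depth_{R_{\bar q}} (P/fP)_{\bar q} + 1$ for primes $q \ni f$, and moreover $\dim S_q = \dim R_{\bar q} + 1$ when $f \in q$. First I would dispose of (1): a prime $q$ of $S$ has codimension $\le n$ only if it contains $f$ (since $f$ is a nonzerodivisor, every minimal prime of $S$ avoids $f$, so primes not containing $f$ localize to modules over $S_q$ whose freeness is inherited from $S$; more carefully one checks the statement only needs testing at primes containing $f$), in which case $\bar q = q/(f)$ has codimension $\le n$ in $R$, hence $M_{\bar q}$ is $R_{\bar q}$-free; then $N_q / f N_q$ is free over $R_{\bar q} = S_q/(f)$, and since $f$ is $N_q$-regular, a standard lifting argument (lift a basis, apply Nakayama, and use that $f$ is regular to conclude the lifted map is injective) shows $N_q$ is $S_q$-free.

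For part (2), I would combine part (1) with the depth formula. Fix $q \in \Spec(S)$. If $f \notin q$, then $N_q$ is a module over $S_q = R'$ where... — actually the clean way is: if $f\notin q$ then $q$ is not in $\Supp(M)$ considerations don't directly apply, so instead localize and note $N_q$ is a module over the localization of $S$, but $f$ is a unit there, so $N_q = M'$ for the corresponding localization of $R$; since $M$ is $(\Se_n)$, so is this localization, giving $\depth_{S_q} N_q \ge \min\{n, \dim S_q\}$. Hmm, this needs $S_q \cong$ a localization of $R$ — true since inverting $f$ on $S$ and on $R$... no. Let me instead just handle $f \in q$, which is the only interesting case: then $\depth_{S_q} N_q = \depth_{R_{\bar q}} M_{\bar q} + 1 \ge \min\{n, \dim R_{\bar q}\} + 1 \ge \min\{n+1, \dim S_q\} \ge \min\{n,\dim S_q\}$, using $(\Se_n)$ for $M$; and for $f\notin q$ one notes that such $q$ corresponds to a prime of $S_f$, and since $N$ is $f$-torsion-free, $N_q$ embeds appropriately — in fact the cleanest route is that $\Spec(S_f)$ is covered by the punctured-spectrum argument if $q$ is not minimal, using induction on dimension, and minimal primes are vacuous. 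I would streamline this in the write-up.

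For part (3), take a finite free presentation $S^{a} \xrightarrow{\psi} S^{b} \to N \to 0$; since $f$ is regular on $N$, $\Ext^1_S(N,K)$ and $\Hom_S(N,K)$ are computed from the dual complex, and reducing mod $f$ (using that $f$ is regular on $K$ and on the relevant cokernels) gives a free presentation of $M = N/fN$ over $R$. The short exact sequence $0 \to K \xrightarrow{f} K \to K/fK \to 0$ yields, since $f$ is $K$-regular, a long exact sequence relating $\Ext^i_S(N,K)$ and $\Ext^i_R(M, K/fK)$; concretely $\Hom_S(N,K)/f\Hom_S(N,K) \hookrightarrow \Hom_R(M,K/fK)$ and the cokernel injects into $\Ext^1_S(N,K)$, while $\Ext^1_S(N,K)/f \hookrightarrow \Ext^1_R(M,K/fK)$. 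From $\Ext^1_R(M,K/fK) = 0$ we get $\Ext^1_S(N,K) = f\,\Ext^1_S(N,K)$; since $\Ext^1_S(N,K)$ is finitely generated, Nakayama gives $\Ext^1_S(N,K) = 0$, and then the same long exact sequence collapses to the desired isomorphism $\Hom_R(M,K/fK) \cong \Hom_S(N,K)/f\Hom_S(N,K)$.

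The main obstacle I expect is bookkeeping in part (2) at primes $q$ not containing $f$ — making sure the $(\Se_n)$ condition transfers without circularity — but this is handled by observing that such primes exhaust $\Spec(S) \setminus V(f)$, which is a union of punctured spectra of lower-dimensional local rings (allowing induction on $\dim S$), with the base case being vacuous since minimal primes of $S$ are not in $\Supp$ of anything $f$-torsion and contribute no depth constraint. Parts (1) and (3) are routine once the reduction-mod-$f$ machinery is set up; (3) in particular is essentially the statement that $\Ext$ commutes with the base change $S \to R$ in degrees $\le 1$ under the regularity hypotheses, which is where the Nakayama step does the real work.
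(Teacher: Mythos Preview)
Your argument for part (3) is correct and essentially identical to the paper's: apply $\Hom_S(N,-)$ to $0\to K\xrightarrow{f} K\to K/fK\to 0$, identify $\Ext^i_S(N,K/fK)$ with $\Ext^i_R(M,K/fK)$ (valid since $f$ is regular on $N$), and conclude by Nakayama.

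The genuine gap is in parts (1) and (2), precisely at primes $q\in\Spec S$ with $f\notin q$. For such $q$ the hypothesis on $M=N/fN$ gives you nothing directly, since $M_q=0$. In part (1) your parenthetical claim that ``the statement only needs testing at primes containing $f$'' is simply false (minimal primes of $S$ have height $0\le n$ and never contain the regular element $f$). In part (2) your proposed induction on $\dim S$ cannot work: localizing at any prime $q$ not containing $f$ makes $f$ a unit in $S_q$, so the hypotheses of the lemma no longer hold and the induction hypothesis is unavailable; and if you instead try to pass to a larger prime $p\ni f$, knowing $\depth N_p$ does not in general control $\depth N_q$ for $q\subset p$.

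The paper handles these cases as follows. For (1): given $p\not\ni f$ of height $\le n$, choose a minimal prime $q$ of $(p,f)$; then $\height_S q\le n+1$, so $q/(f)$ has height $\le n$ in $R$, hence $M_{q/(f)}$ is free, $N_q$ is free by Nakayama, and $N_p$ is free as a further localization. For (2): the paper invokes the fact that the locus $U=\{p:\depth N_p\ge\min(n,\dim S_p)\}$ is \emph{open} in $\Spec S$ (this is where excellence is used). Since $S$ is local and $f\in m$, any open set containing $V(f)$ is all of $\Spec S$; so it suffices to check $V(f)\subset U$. For that step your one-line depth computation $\depth_{S_q}N_q=\depth_{R_{\bar q}}M_{\bar q}+1$ is actually cleaner than the paper's induction on $n$ via local cohomology and Lemma~\ref{lengthLC}, but you still need the openness argument to reach the primes in $D(f)$.
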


\begin{proof}
Let $p \in \Spec(S)$ of codimension $n$. If $f \in p$, then $M_p$ is free, and (1) follows by Nakayama's Lemma. If $f \notin p$, one can choose a minimal prime $q$ of $(f,p)$. Then $q$ has codimension $n$ in $R$, so $M_q$ and thus $N_q$ is free. Since $p \subset q$, $N_p$ is free as well.   

For (2), let $V(f) = \{p\in \Spec(S)| f\in p\}$ and $U = \{p\in \Spec(S)|\depth_{R_p}M_p \geq \min\{n,\dim(R_p)\} \}$. It is standard that $U$ is open in $\Spec(S)$ (for example, see \cite{FOV}, 3.3.9). Since $S$ is local, it is enough to show that $V(f) \subset U$. We now proceed by induction on $n$. Suppose  $n=1$, we first prove that $\depth N\geq 1$. The long exact sequence of local cohomology coming from  \[ 0 \to N \xrightarrow{f}  N \to M \to 0 \] and Nakayama shows that $\LC_m^0(N)=0$. Our argument localizes, so $V(f) \subset U$, as desired. Suppose we already know that $N$ is $(\Se_{n-1})$ and $M$ is $(\Se_n)$. Again, it suffices to prove $\depth N\geq n$. By  Lemma \ref{lengthLC} we know that
 $\LC_m^{n-1}(N)$ has finite length. But using the fact that $\depth M\geq n$ and the long exact sequence of local cohomology one gets:
 \[ \cdots \to \LC_m^{n-1}(N) \xrightarrow{f}  \LC_m^{n-1}(N)  \to 0 \]
 which forces $\LC_m^n(N) =0$, which gives what we want.
 
 We now prove (3). Apply $\Hom_S(N,-)$ to the short exact sequence 
 \[ 0 \to K \xrightarrow{f}  K \to K/(f) \to 0 \]
 we get:
 \[ 0 \to \Hom_S(N,K) \xrightarrow{f}  \Hom_S(N,K)  \to \Hom_S(N,K/(f)) \to \Ext_S^1(N,K) \xrightarrow{f} \Ext_S^1(N,K) \to 0 \] 
 
 Nakayama's Lemma  provides the desired conclusions. 
\end{proof}

\begin{thm}
Let $S$ be an excellent local ring and $N$ be a $S$-module. Suppose $f$ is a regular element on $S$ and $N$. Let $R=S/(f)$ and $M=N/(f)$.  Assume $M$ is $(\Se_3)$ and  free in codimension $2$ as an $R$-module. If  $A =\Hom_R(M,M)$ is $(\Se_3)$ and has finite global dimension, then so is $B =\Hom_S(N,N)$. 
\end{thm}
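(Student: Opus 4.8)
The plan is to reduce the statement about $B = \Hom_S(N,N)$ to the known properties of $A = \Hom_R(M,M)$ by a standard deformation/base-change argument. The key point is that $f$ should be a regular element on $B$, and that $B/(f) \cong A$; once this is established, finiteness of global dimension of $B$ follows from finiteness for $A$ together with the Auslander--Buchsbaum type transfer along the regular element $f$, and the Serre condition passes up by Lemma \ref{liftS_n}. So I would organize the proof in three steps.

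First, I would verify the hypotheses needed to invoke part (3) of Lemma \ref{liftS_n} with $K = N$. Since $M = N/(f)$ is $(\Se_3)$ and free in codimension $2$ over $R$, Lemma \ref{useful} (applied with $n=2$, using that $A = \Hom_R(M,M)$ is $(\Se_3)$) gives $\Ext^1_R(M,M) = 0$. Then Lemma \ref{liftS_n}(3) yields $\Ext^1_S(N,N) = 0$ and the crucial isomorphism $\Hom_R(M,M) \cong \Hom_S(N,N)/(f)$, i.e. $B/(f) \cong A$. Moreover $f$ is $B$-regular: indeed $\Hom_S(N,-)$ applied to $0 \to N \xrightarrow{f} N \to M \to 0$ shows the kernel of multiplication by $f$ on $B$ injects into $\Hom_S(N, \text{(something)})$ and vanishes because $f$ is $N$-regular — more directly, $B \subseteq \Hom_S(N,N)$ is a submodule of a torsion-free-over-the-relevant-locus module, or one reads regularity off the same long exact sequence as in the proof of Lemma \ref{liftS_n}(3), where the leftmost map is multiplication by $f$ on $\Hom_S(N,K)$ and it is injective. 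So we have a regular element $f \in B$ with $B/(f) \cong A$.

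Second, I would transfer finiteness of global dimension from $A$ to $B$. Here I would use Construction \ref{cons} in the setting over $S$: since $R|M$ forces $S|N$ (as $M = N/(f)$ contains $R = S/(f)$ as a summand, a summand of $N/(f)$ lifts — or one simply assumes $S|N$, which is implied), every $B$-module has a second syzygy of the form $\Hom_S(N,N')$, and the analogue of Corollary \ref{remark1} holds for $B$. By Lemma \ref{liftS_n}(2), the relevant syzygy modules over $S$ remain $(\Se_3)$ and free in codimension $2$, so the construction goes through and one produces, over $S$, resolutions by $\add(N)$ that base-change mod $f$ to the resolutions over $R$ of the corresponding $A$-modules. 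Finiteness of $\gl(A)$ then bounds the length of these, and a standard argument (Nakayama on the complex $\Hom_S(N,\mathcal F)$, using that a bounded complex of projective $B$-modules which becomes split exact after $\otimes_B B/(f)$ is split exact) shows the $B$-resolution terminates. Concretely, $\pd_B \Hom_S(N,N') = \pd_A \Hom_R(M,M') $ because $f$ is regular on all terms, and $\gl(B) \leq \gl(A) + 1$.

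The main obstacle I anticipate is the second step: making the base-change of projective resolutions rigorous, specifically ensuring that applying $-\otimes_S R$ to the $\add(N)$-resolution of $N'$ over $S$ really gives the $\add(M)$-resolution of $M' = N'/(f)$ over $R$ with no loss of exactness, and that $\Hom_S(N,-)$ commutes with this reduction on the relevant modules — this is where Lemma \ref{liftS_n}(3) and the vanishing $\Ext^1_S(N, -)=0$ on the syzygies must be invoked repeatedly, so one needs each syzygy $N_i$ to again satisfy the hypotheses (it does, by Lemma \ref{liftS_n}(1),(2) and the fact that kernels in the construction are $(\Se_3)$ and free in codimension $2$ because they are second syzygies of MCM-type modules). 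The bookkeeping showing the induction is self-sustaining is the technical heart; the rest is formal.
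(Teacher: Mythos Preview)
Your first step is exactly the paper's: use Lemma~\ref{useful} to get $\Ext_R^1(M,M)=0$, then Lemma~\ref{liftS_n}(3) to obtain $B/(f)\cong A$ with $f$ regular on $B$, and Lemma~\ref{liftS_n}(2) to lift $(\Se_3)$ from $A$ to $B$.

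Your second step is correct in principle but takes a harder road than the paper. You propose to build an $\add(N)$-resolution over $S$, reduce it modulo $f$ to an $\add(M)$-resolution over $R$, and use termination of the latter (from $\gl A<\infty$) to force termination of the former by Nakayama. This works, but the compatibility checks you flag as ``the technical heart'' are exactly what the paper avoids. The paper's shortcut is this: once you know $B/(f)\cong A$, you have $\pd_B A = 1$ from the sequence $0\to B\xrightarrow{f} B\to A\to 0$. Hence every projective $A$-module has $\pd_B\le 1$, and therefore every $A$-module $Y$ with $\pd_A Y<\infty$ also has $\pd_B Y<\infty$. After the same reduction you make (to $\Hom_S(N,K)$ with $K$ MCM and $\Hom_S(N,K)$ satisfying $(\Se_3)$), one applies Lemma~\ref{liftS_n}(3) with this $K$ to get $\Hom_S(N,K)/(f)\cong \Hom_R(M,K/(f))$, which is an $A$-module of finite $\pd_A$; hence it has finite $\pd_B$, and then a one-line Nakayama argument on $\Tor^B$ gives $\pd_B\Hom_S(N,K)<\infty$. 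No resolutions need to be lifted or compared term by term.

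So your proposal is sound, but the anticipated obstacle is self-imposed: replacing the parallel-resolution argument with the observation $\pd_B A=1$ eliminates all of the inductive bookkeeping on syzygies.
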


\begin{proof}
We first note that $\Ext_R^1(M,M) =0$ by \ref{useful}. Now Lemma \ref{liftS_n} shows that $B$ is $(\Se_3)$. As in the proof of \ref{mainhyper} it is enough to prove that $\pd_B\Hom_S(N,K)$ is finite for any MCM $S$-module $K$ such that $\Hom_S(N,K)$ is $(\Se_3)$. But then $\Ext_S^1(N,K)=0$ and $\Hom_S(N,K/(f)) \cong \Hom_R(M,K/(f)) \cong \Hom_S(N,K)/(f)$. Now it is enough, by Nakayama, to show that $\pd_B\Hom_S(N,K/(f))$ is finite.  But by assumption $\Hom_S(N,K/(f))$ has a finite resolution by projective $A$-modules. Since each projective $A$-module has finite projective dimension over $B$ (in fact $\pd_BA=1$),  we are done.
\end{proof}

\begin{cor}\label{liftNCCR}
Let $S$ be an complete local ring and $f$ is a regular element on $S$.  Let $R=S/(f)$. Suppose that $R$ admits a NCCR 
$A =\Hom_R(M,M)$ such  that $M$ is $(\Se_4)$ and  free in codimension $3$. Then there is a lifting  $N$ of $M$ to $S$ such that $B =\Hom_S(N,N)$ is a NCCR over $S$.  
\end{cor}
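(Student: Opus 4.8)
The plan is to deduce Corollary \ref{liftNCCR} from the deforming-NCCR theorem that precedes it, by first producing the lift $N$ and then checking that the theorem's hypotheses apply to it. First I would invoke the lifting result for modules over complete intersections (the reference \cite{ADS} used earlier in the section): since $S$ is complete, $R = S/(f)$, and $M$ is an MCM $R$-module admitting an NCCR, $M$ lifts to an $S$-module $N$ with $f$ regular on $N$ and $N/(f) \cong M$. Strictly I should first make sure $M$ is liftable; the cleanest route is to observe that $M$ is MCM over $R$ (an NCCR module over a Gorenstein complete intersection is reflexive and, by the results invoked, one reduces to the MCM case — or one simply adds this to the hypotheses), hence of finite projective dimension over $S$-nothing, so the obstruction in $\Ext^2$ vanishes and the lift exists.

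Next I would feed $N$ into the preceding Theorem. That theorem requires: $f$ regular on $S$ and on $N$ (built into the lift), $M = N/(f)$ is $(\Se_3)$ and free in codimension $2$ as an $R$-module, and $A = \Hom_R(M,M)$ is $(\Se_3)$ with finite global dimension. The hypothesis of the Corollary gives us $M$ is $(\Se_4)$, hence a fortiori $(\Se_3)$, and free in codimension $3$, hence a fortiori free in codimension $2$. Since $A$ is an NCCR over the complete intersection $R$, it is MCM over $R$, hence $(\Se_3)$, and it has finite global dimension by definition of NCCR. So all hypotheses of the Theorem are met, and we conclude $B = \Hom_S(N,N)$ has finite global dimension.

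It then remains to upgrade "finite global dimension" to "NCCR", i.e.\ to check $B$ is MCM over $S$. This is exactly where the extra hypotheses $(\Se_4)$ and "free in codimension $3$" are used: by Lemma \ref{useful}, applied over $R$ with $n = 3$, the fact that $A = \Hom_R(M,M)$ satisfies $(\Se_4)$ (it is MCM, so it satisfies $(\Se_n)$ for all $n$) together with $M$ locally free in codimension $3$ and $M$ satisfying $(\Se_3)$ forces $\Ext_R^1(M,M) = \Ext_R^2(M,M) = 0$. By Lemma \ref{liftS_n}(3) (with $K = N$), this gives $\Ext_S^1(N,N) = 0$ and $\Hom_S(N,N)/(f) \cong \Hom_R(M,M) = A$. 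Since $A$ is MCM over $R = S/(f)$ and $f$ is a nonzerodivisor on $B = \Hom_S(N,N)$ (which one checks from the $\Ext^1$ vanishing, or directly since $N$ is torsion-free over the domain $S$), we get $\depth_S B = \depth_R (B/(f)) + 1 = \depth_R A + 1 = \dim R + 1 = \dim S$, so $B$ is MCM over $S$. Combined with the finite global dimension from the Theorem, $B$ is an NCCR over $S$.

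The main obstacle I anticipate is the bookkeeping around the lift $N$ and the compatibility isomorphism $B/(f) \cong A$: one must be careful that the module-lifting theorem of \cite{ADS} applies (which is why completeness of $S$ is assumed) and that $f$ is genuinely a nonzerodivisor on $\Hom_S(N,N)$ so that the depth-counting argument $\depth_S B = \depth_R(B/fB) + 1$ is valid — this is the kind of step where the "free in codimension $3$" and $(\Se_4)$ hypotheses are doing real work rather than being cosmetic, since they are precisely what forces the needed $\Ext$-vanishing. Everything else is a routine matter of citing Lemma \ref{useful}, Lemma \ref{liftS_n}, and the preceding Theorem in the right order.
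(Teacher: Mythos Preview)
Your approach is correct and is the one the paper intends: no proof is written out for this Corollary, but it is clearly meant to follow from the preceding Theorem once $M$ is lifted to $S$ via \cite{ADS}, and you carry this out, including the extra depth-counting step needed to upgrade the Theorem's conclusion ($B$ is $(\Se_3)$ with finite global dimension) to $B$ being MCM.

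Two small cleanups. Your opening paragraph on liftability is muddled: $M$ need not be MCM (only $(\Se_4)$ is assumed), and the phrase about ``finite projective dimension over $S$-nothing'' is garbled. The correct justification is exactly the one you give two paragraphs later: Lemma~\ref{useful} with $n=3$ yields $\Ext_R^2(M,M)=0$, and it is precisely that vanishing, together with completeness of $S$, that makes the \cite{ADS} lift exist---lead with this rather than the MCM detour. Second, to declare $B$ an NCCR you should record that $S$ is a Gorenstein normal domain (both properties deform from $R=S/(f)$) and that $N$ is reflexive (Lemma~\ref{liftS_n}(2) gives $N$ is $(\Se_4)$, hence $(\Se_2)$, hence reflexive over the Gorenstein ring $S$); both checks are routine but belong in the write-up.
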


\begin{rmk}
The above Corollary gives another proof of \ref{ciNCCR}.
\end{rmk}

Finally, we mention that one could use the  ideas in this paper to explain failure of certain non-commutative resolutions to be crepant in the positive characteristic case. Let $R$ be a local ring of characteristic $p$. Let $^eR$ denotes $R$ as a module over itself via the $e$-th  power of the Frobenius homomorphism.  Recently, the  module $A=\Hom_R(^eR, ^eR)$ has been shown to have finite global dimension (so it is a non-commutative resolution) in some cases (\cite{TY}). It is known that over complete intersections, $^eR$ is $\Tor$-rigid (see \cite{AM}). Hence the following result is straightforward application of Corollary \ref{keyCor} (compare with Section 6 in \cite{TY}): 

\begin{cor}
Let $R$ be a local complete intersection of characteristic $p$  such that $R$ is $(\Reg_2)$ and $e$ any integer. Then $A=\Hom_R(^eR, ^eR)$ is not a NCCR (it will not be ``crepant"). 
\end{cor}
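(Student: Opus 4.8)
The plan is to reduce this statement to a direct application of Corollary \ref{keyCor}. The key point is that all the hypotheses needed for that Corollary are either assumed or known for $R$ and the module $M = {}^eR$, so the argument is essentially a matter of assembling the right inputs. First I would record that $R$, being a complete intersection that is $(\Reg_2)$, is in particular Gorenstein (hence satisfies $(\Se_3)$, indeed is Cohen--Macaulay of all depths) and normal, so it is a Gorenstein local normal domain and the setting of the definition of NCCR makes sense. Thus the hypotheses ``$(\Reg_2)$ and $(\Se_3)$'' of Corollary \ref{keyCor} are met.

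Next I would argue by contradiction: suppose $A = \Hom_R({}^eR, {}^eR)$ is an NCCR of $R$. To invoke Corollary \ref{keyCor} one needs ${}^eR$ to be a reflexive module giving an NCCR; reflexivity of ${}^eR$ follows since $R$ is normal (equivalently $(\Reg_1) + (\Se_2)$), as ${}^eR$ is a torsion-free, $(\Se_2)$ module over such a ring (it is even MCM when $R$ is Cohen--Macaulay and $F$-finite). Then Corollary \ref{keyCor} forces every non-free module in $\add({}^eR)$ to fail to be $\Tor$-rigid. On the other hand, the cited result \cite{AM} says that over a complete intersection ${}^eR$ is $\Tor$-rigid, and $\Tor$-rigidity passes to direct summands (if $({}^eR, N)$ is $\Tor$-rigid for all $N$ then so is any $(K, N)$ with $K \mid {}^eR$, since a vanishing $\Tor_i(K,N)$ sits inside $\Tor_i({}^eR, N)$ as a direct summand). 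Hence \emph{every} module in $\add({}^eR)$ is $\Tor$-rigid. Combining the two, every module in $\add({}^eR)$ is free, so in particular ${}^eR$ itself is free, i.e. $R$ is regular. But a regular local ring has no non-trivial NCCR in the interesting sense --- more precisely, if ${}^eR$ is free then $A \cong \Hom_R(R^t, R^t)$ is a matrix ring over $R$, which is Morita equivalent to $R$ and is an NCCR only in the trivial way; in any case this contradicts the expectation that $A$ is a genuine (``crepant'') non-commutative resolution, and one may simply note that the statement is understood to assert $A$ is not an NCCR \emph{of the form that is crepant}, or invoke that $R$ being regular makes the construction vacuous.

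The one subtlety I would be careful about is exactly what ``is not a NCCR'' should mean when $R$ is itself regular: strictly, $\Hom_R(R,R) = R$ is trivially a (degenerate) NCCR. So the cleanest formulation, and the one I would write into the proof, is: \emph{either} $R$ is regular, in which case the statement is vacuous or the endomorphism ring is Morita-trivial, \emph{or} $R$ is singular, in which case the above contradiction shows $A$ cannot be an NCCR. I expect the main (very mild) obstacle to be precisely this bookkeeping about the regular case and the reflexivity/MCM property of ${}^eR$ over an $F$-finite complete intersection; the homological heart of the argument is entirely supplied by Corollary \ref{keyCor} together with the $\Tor$-rigidity of Frobenius pushforwards from \cite{AM}. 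The parenthetical remark ``it will not be crepant'' is then justified by comparing with \cite{TY}, where such $A = \Hom_R({}^eR,{}^eR)$ is shown to have finite global dimension (so it is a non-commutative \emph{resolution}) without being MCM, hence not crepant in Van den Bergh's sense.
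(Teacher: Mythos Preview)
Your approach is correct and is exactly the one the paper has in mind: it explicitly says the result is a ``straightforward application of Corollary~\ref{keyCor}'' together with the $\Tor$-rigidity of ${}^eR$ from \cite{AM}, and that is precisely what you do. One small simplification: you do not need to argue that $\Tor$-rigidity passes to summands, since ${}^eR$ itself lies in $\add({}^eR)$ and is $\Tor$-rigid, so Corollary~\ref{keyCor} directly forces ${}^eR$ to be free (hence $R$ regular, by Kunz); your careful bookkeeping about the regular/degenerate case is a fair point that the paper leaves implicit.
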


\end{document}